\newtheorem{theorem}{Theorem}[section]
\newtheorem{lemma}[theorem]{Lemma}
\newtheorem{note}[theorem]{Note}
\newtheorem{cor}[theorem]{Corollary}
\newtheorem*{Theorem1'}{Theorem 1'}
\theoremstyle{definition}
\theoremstyle{remark}
\newcommand \Z{{\mathbb Z}}
\newcommand \Q{{\mathbb Q}}
\newcommand \N{{\mathbb N}}
\newcommand \GL{{\mathrm{GL}}}
\begin{document}

\title[Linear diophantine equations in several variables]{Linear diophantine equations in several variables}

\author{R. Quinlan}
\address{School of Mathematical and Statistical Sciences, National University of Ireland, Galway, Ireland}
\email{rachel.quinlan@nuigalway.ie}

\author{M. Shau}
\address{Department of Mathematics and Statistics, University of Regina, Canada}
\email{tuktukishau@gmail.com}

\author{F. Szechtman}
\address{Department of Mathematics and Statistics, University of Regina, Canada}
\email{fernando.szechtman@gmail.com}
\thanks{The third author was supported in part by an NSERC discovery grant}

\subjclass[2020]{11D04, 15A06}



\keywords{linear diophantine equation; unimodular vector; Smith normal form}

\begin{abstract} Let $R$ be a ring and let $(a_1,\dots,a_n)\in R^n$ be a unimodular vector,  where 
$n\geq 2$ and each $a_i$ is in the center of $R$. Consider the
linear equation $a_1X_1+\cdots+a_nX_n=0$, with solution set $S$. Then $S=S_1+\cdots+S_n$, where each $S_i$ is naturally
derived from $(a_1,\dots,a_n)$, and we give a presentation of $S$ in terms of generators taken
from the~$S_i$ and appropriate relations. 
Moreover, under suitable assumptions, we elucidate the structure of each quotient module $S/S_i$. 
Furthermore, assuming that $R$ is a principal ideal domain, we provide a simple way to construct a basis of $S$
and, as an application, we determine the structure of the quotient module $S/U_i$, where each $U_i$
is a specific module containing $S_i$.
\end{abstract}

\maketitle

\section{Introduction}

Let $R$ be a principal ideal domain and let $(a_1,\dots,a_n)\in R^n$ be a unimodular vector, that is, 
$R=Ra_1+\cdots+Ra_n$, where $n\geq 2$. Let $S$ be the submodule of $R^n$ of all solutions $(x_1,\dots,x_n)\in R^n$
to the homogeneous linear equation 
\begin{equation}\label{ecu}
a_1X_1+\cdots+a_nX_n=0.
\end{equation}
In this paper, we study structural properties of several quotients of $S$ naturally arising from (\ref{ecu}),
we describe $S$ by means of generators and relations, and we construct a basis of $S$ directly from the 
coefficients in (\ref{ecu}).

Our interest is primarily in the extent to which $S$ may be described directly and irredundantly in terms of $a_1,\dots ,a_n$. Abstractly, $S$ is a free module of rank $n-1$ over $R$, and so it admits a basis consisting of $n-1$ elements. We begin with a collection of $\binom{n}{2}$ elements that span $S$ and are directly defined in terms of the $a_i$. This collection does not in general contain a basis of $S$ as a subset, but it contains bases for submodules of $S$ of full rank corresponding to each nonzero~$a_i$. A small adjustment to the original spanning set motivates the definition of another family of submodules, and we study the quotient structures in all cases.

Much of the literature on (systems of) linear equations over rings is concerned with the problem of efficiently computing and describing the non-negative solutions of systems of equations written over $\Z$, a problem of interest in linear programming and in combinatorial optimization. From the linear programming point of view, numerous authors have proposed algorithms for determining tractable generating sets for the monoid of non-negative solutions; see for example \cite{CF}, \cite{PV}, \cite{CD} and the references therein, for some different approaches to this problem. From a commutative algebra viewpoint, Stanley \cite{S} has given a detailed theoretical analysis of the monoid of non-negative solutions to a system of linear equations over $\Z$. In the case of a single homogeneous equation, the influence of the coefficients of (\ref{ecu}) on the factorial properties of the monoid of non-negative solutions is investigated by Chapman, Krause and Oeljeklaus in \cite{CKO}.

In our context, the equation (\ref{ecu}) is written over a ring $R$ that may not have an order relation, and our object of study is the $R$-module $S$ of all solutions. While the list of coefficients of (\ref{ecu}) does not readily translate to a basis of $S$, it does provide very simple bases for several submodules of $S$ whose quotients are amenable to analysis, yielding a method to construct a basis of $S$ from the $a_i$. Our starting point is a set of $\binom{n}{2}$ vectors in $R^n$, defined as follows. We remark that the same collection of vectors is considered by Kryvyi \cite{K}, as an ingredient in the study of systems of linear equations over $\Z$.
We are confused by the use of the term \emph{basis} in \cite[Theorem 2]{K}, as a basis of $S$ cannot contain $n$ vectors.

For $1\leq i\neq j\leq n$, let $v(i,j)\in S$ be the vector whose $i$th and $j$th
entries are respectively equal to $-a_j$ and $a_i$, and whose other entries are all equal to~0, observing that
$v(j,i)=-v(i,j)$. For a fixed $1\leq i\leq n$, let $S_i$ be the span of all $v(i,j)$ 
with $j\neq i$. We further set  $N=\{1,\dots,n\}$ and for any subset $M$ of $N$, we define 
$$
I_M=\underset{i\in M}\sum Ra_i,\; S_M=\underset{i\in M}\sum S_i,\; B_M=\{v(i,j)\,|\, 1\leq i<j\leq n, \{i,j\}\cap M\neq\emptyset\}.
$$
Depending on the choice of $(a_1,\dots,a_n)$, there may be several subsets $M$ of $N$ such that $I_M=R$.
We fix one of them and set $m=|M|$.

The fact that $I_M=R$ readily implies $S_M=S$ (see Lemma \ref{gen}), whence $S$ is spanned by $B_M$. Theorem \ref{rela}
gives a presentation of $S$ in terms of the generators from $B_M$. An alternative proof is given in Theorem \ref{rela2}.
The structure of each quotient module $S/S_i$ is elucidated in Theorem \ref{S/S_1}.

Now $S$ is a free $R$-module of rank $n-1$ and we consider the problem of constructing a basis of $S$ in a straightforward manner.
It is clear that if $m=1$ then $B_M$ is already a basis of $S$. If $m=2$ we give a closed formula to produce a basis
of  $S$. Theorem \ref{basis} extends this formula and
provides a general and simple answer to this problem for any $m$. We demonstrate the power of Theorem~\ref{basis} by determining
the structure of each quotient module $S/U_i$ such that $a_i\neq 0$, where 
$U_i$ is the span of all vectors $v(i,j)/\gcd(a_i,a_j)$, $j\neq i$.
This turns out to be considerably more difficult than the study of $S/S_i$.

We will keep the above notation throughout the paper, although the assumption that $R$ be a principal
ideal domain is only necessary for some of our results. Thus, we will only suppose at the outset that
$R$ is an arbitrary ring with $1\neq 0$, not necessarily commutative, and that each $a_i\in Z(R)$, the center of $R$.
Further assumptions will be introduced as needed.

\section{Generators and relations for $S$}

\begin{lemma}
\label{gen}
We have $a_i S\subseteq S_i$ for all $1\leq i\leq n$, so $S=S_M$ is spanned by $B_M$.
\end{lemma}

\begin{proof} We first show that $a_1 S\subseteq S_1$. 
For this purpose, let $(x_1,x_2,\dots,x_n)\in S$. Then
$$
a_1x_1=-(a_2x_2+\cdots+a_n x_n),
$$
whence
$$
a_1(x_1,x_2,\dots,x_n)=(-(a_2x_2+\cdots+a_n x_n),a_1x_2,\dots,a_1x_n)=x_2v(1,2)+\cdots+
x_nv(1,n),
$$
as required. Likewise we show that $a_iS\subseteq S_i$, $1\leq i\leq n$,
so $a_iS\subseteq S_M$, $i\in M$.
Since $I_M=R$, it follows that $S=S_M$ and therefore $S$ is spanned by $B_M$.
\end{proof}

We know from Lemma \ref{gen} that $B_M$ generates $S$. We turn our attention to finding defining
relations among these generators.
Now, if $m=1$ then $B_M$ is a basis of~$S$, with no linear relations.
Thus, we may assume without loss that $m\geq 2$. If $n=2=m$ then from $R=Ra_1+Ra_2$ we easily see that
$B_M$ is a basis of $S$. Hence, we may also assume that $n\geq 3$. It is then clear that the set $E$ of all
triples $(i,j,k)$ such that $1\leq i<j<k\leq n$ and $|\{i,j,k\}\cap M|\geq 2$ is nonempty. For any $(i,j,k)\in E$, the vectors
$v(i,j),v(i,k),v(j,k)$ are in $B_M$ and satisfy the following relations:
\begin{equation}
\label{defrel}
a_k v(i,j)-a_j v(i,k)+a_i v(j,k)=0.
\end{equation}
We aim to show that (\ref{defrel}) are defining relations. This means the following. 

Let $D$ be the set of all pairs $(i,j)$ such that $1\leq i<j\leq n$
and $\{i,j\}\cap M\neq\emptyset$, and let $X$ be a free $R$-module 
with basis $x(i,j)$, where $(i,j)\in D$. Let $\Lambda:X\to S$ be the $R$-module epimorphism given by $x(i,j)\to v(i,j)$. 
For any triple $(i,j,k)\in E$, set 
$$
y(i,j,k)=a_k x(i,j)-a_j x(i,k)+a_i x(j,k),
$$
and let $Y$ be the $R$-span of all $y(i,j,k)$ with $(i,j,k)\in E$. It follows from (\ref{defrel}) that $Y\subseteq\ker\Lambda$. 
The assertion that (\ref{defrel}) are defining relations for $S$ means that
\begin{equation}
\label{ky}
\ker\Lambda=Y,
\end{equation}
that is,
$$
S\cong \langle x(i,j),\, (i,j)\in D|\, a_k x(i,j)-a_j x(i,k)+a_i x(j,k)=0,\, (i,j,k)\in E\rangle.
$$

We say that $M$ is \emph{normal} if for each $p\in M$,  the $n-1$ vectors 
$v(i,p)$, $i\neq p$, are $R$-linearly independent. Observe that if no $a_p$, with $p\in M$, is a
zero divisor, then $M$ is normal. In particular, if $R$ is a domain, then 
$M$ automatically normal. We may now state the following result.

\begin{theorem}\label{rela} Suppose $M$ is normal.
Then (\ref{defrel}) are defining relations for $S$.
\end{theorem}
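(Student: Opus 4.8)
The goal is to show $\ker\Lambda = Y$, and since the inclusion $Y\subseteq\ker\Lambda$ is already established by (\ref{defrel}), the work is to prove $\ker\Lambda\subseteq Y$. The plan is to take an arbitrary element $z=\sum_{(i,j)\in D} c_{ij}\,x(i,j)\in\ker\Lambda$ and show that, modulo $Y$, it can be rewritten in a canonical reduced form that is forced to be $0$ by the linear independence built into the normality hypothesis. The relations $y(i,j,k)$ allow us to trade one generator for a combination of others, so the strategy is to use them to eliminate all generators $x(i,j)$ except those of a distinguished shape.

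First I would fix a single element $p\in M$ (using $m\geq 1$; recall we have reduced to $m\geq 2$, $n\geq 3$). Since $M$ is normal, the $n-1$ vectors $v(i,p)$, $i\neq p$, are $R$-linearly independent, and they span a full-rank submodule $S_p$ of $S$. The idea is that the $x(i,p)$ (suitably signed, since $x(j,i)$ is not literally a basis element but $v(j,i)=-v(i,j)$) should serve as a preferred set of $n-1$ generators, and every other $x(i,j)$ should be expressible modulo $Y$ in terms of these. Concretely, for a generator $x(i,j)$ with $p\notin\{i,j\}$, the triple involving $i,j,p$ lies in $E$ (because $|\{i,j,p\}\cap M|\geq 1$ from $p$, and we need $\geq 2$ — here is a point to check: if neither $i$ nor $j$ is in $M$, does the triple still lie in $E$? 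Since $(i,j)\in D$ forces $\{i,j\}\cap M\neq\emptyset$, at least one of $i,j$ is in $M$, so together with $p$ we get $|\{i,j,p\}\cap M|\geq 2$, and the relation is available). That relation lets me solve for $a_p\,x(i,j)$ in terms of $x(i,p)$ and $x(j,p)$ modulo $Y$.

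The main obstacle is that the relations only let me solve for $a_p\,x(i,j)$, not $x(i,j)$ itself — there is a spurious coefficient $a_p$ obstructing a clean reduction. The plan to overcome this is to exploit $m\geq 2$: choose a second index $q\in M$. Running the same reduction with $q$ in place of $p$ expresses $a_q\,x(i,j)$ modulo $Y$ in terms of $x(i,q)$, $x(j,q)$, and then re-expressing those via $p$ converts everything into the $x(\cdot,p)$ generators with coefficients involving $a_p,a_q$. Because $I_M=R$, and more usefully because we can combine the reductions coming from all indices of $M$, the unimodularity $R=\sum_{i\in M}Ra_i$ should let me assemble from the relations an expression for $x(i,j)$ itself (coefficient $1$) modulo $Y$ in terms of the preferred generators. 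I would make this precise by writing $1=\sum_{r\in M} t_r a_r$ and combining the $a_r$-reductions weighted by $t_r$.

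Once every $z\in\ker\Lambda$ is reduced modulo $Y$ to the form $z'\equiv\sum_{i\neq p} d_i\,x(i,p)$, I apply $\Lambda$: since $z\in\ker\Lambda$ we get $0=\Lambda(z')=\sum_{i\neq p} d_i\,v(i,p)$ (up to signs from the $v(j,i)=-v(i,j)$ convention). Normality of $M$ at the index $p$ now forces all $d_i=0$, hence $z'=0$ and $z\in Y$, completing $\ker\Lambda\subseteq Y$. The delicate bookkeeping will be tracking signs from the antisymmetry $x(i,j)\leftrightarrow x(j,i)$ and verifying at each reduction step that the triple invoked genuinely lies in $E$; the conceptual crux, as noted, is passing from $a_p\,x(i,j)$ to $x(i,j)$, which is exactly where $m\geq 2$ and unimodularity of $I_M$ enter.
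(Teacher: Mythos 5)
Your overall architecture (multiply by $a_p$ to make a generator reducible via the triple relations, funnel everything into the generators $x(i,p)$, apply $\Lambda$, invoke normality at $p$, and use $1=\sum_{r\in M}t_ra_r$ to kill the spurious coefficient) contains all the right ingredients, and your check that the triple $\{i,j,p\}$ lies in $E$ whenever $(i,j)\in D$ and $p\in M\setminus\{i,j\}$ is correct. But the crux step, as you state it, fails: you cannot express $x(i,j)$ \emph{itself} (coefficient $1$) modulo $Y$ as an $R$-combination of the $x(\cdot,p)$ for a single fixed $p$. If that were possible for every generator, then $X/Y$ would be generated by the $n-1$ classes of the $x(i,p)$, and applying $\Lambda$ would give $S=S_p$; but $S/S_p\cong(R/Ra_p)^{n-2}$ (Theorem \ref{S/S_1}), which is nonzero unless $a_p$ is a unit. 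Concretely, your weighted combination $x(i,j)=\sum_r t_ra_rx(i,j)$ runs into two problems: the terms with $r\in M\cap\{i,j\}$ admit no reducing relation at all (the coefficient of $x(i,j)$ in $y(i,j,k)$ is always $a_k$ with $k\notin\{i,j\}$), and the terms with $r\neq p$ reduce only to $x(\cdot,r)$-generators, which can be pushed further into $x(\cdot,p)$-generators only at the cost of another factor $a_p$. The ideal of coefficients $c$ for which $c\,x(i,j)$ lands in $\langle x(\cdot,p)\rangle+Y$ is exactly $Ra_p$, not $R$, so the reduction never terminates with coefficient $1$.

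The repair is a reordering of your own steps, and it is what the paper does (phrased there in terms of strictly upper triangular coefficient matrices). Do not reduce $z$; reduce $a_pz$ for each $p\in M$ separately. Since every term $a_p\,c_{ij}x(i,j)$ of $a_pz$ with $p\notin\{i,j\}$ reduces modulo $Y$ to a combination of $x([i,p])$ and $x([j,p])$, you get $a_pz\equiv\sum_{i\neq p}d_ix([i,p])\pmod Y$; applying $\Lambda$ and using $a_pz\in\ker\Lambda$ gives $\sum_i \pm d_iv(i,p)=0$, and normality at $p$ forces all $d_i=0$, hence $a_pz\in Y$. Only now do you invoke unimodularity, applied to the kernel element rather than to a generator: $z=\sum_{p\in M}t_p(a_pz)\in Y$. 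With that single change of order your argument becomes the paper's proof.
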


 \begin{proof} For a triple $(i,j,k)\in E$, we write $A_{ijk}\in M_n(R)$ 
for the strictly upper triangular matrix that has entries $a_k,\ -a_j$ and $a_i$ respectively in positions $(i,j),\ (i,k)$ and $(j,k)$, with all other entries equal to zero. We note, with reference to (\ref{defrel}), that
  $$
  \sum_{1\le p<q\le n} (A_{ijk})_{pq}v(p,q) = a_k v(i,j)-a_j v(i,k)+ a_i v(j,k) = 0.
$$
That the relations of (\ref{defrel}) define $S$ has the following equivalent interpretation: if $C\in M_n(R)$ is a strictly upper triangular matrix satisfying
$C_{ij}=0$ for $(i,j)\notin D$ and $\sum_{1\le i<j\le n} C_{ij}v(i,j)=0$, then $C$ is
an $R$-linear combination of the matrices $A_{ijk}$.

We now assume that $C$ is such a matrix. Since $I_M=R$, 
to show that $C$ is an $R$-linear combination of the $A_{ijk}$, it is sufficient to show that each 
$a_iC$, $i\in M$, is such a combination. To this end we fix 
$p\in M$ and observe that every entry of $a_pC$ is an $R$-multiple of 
$a_p$. For every pair $(i,j)$ such that $1\leq i<j\leq n$, $(i,j)\in D$, and $p\notin\{i,j\}$, 
the matrix $A_{[ijp]}$ has $\pm a_p$ in its $(i,j)$-position (where the square brackets mean that the indices are arranged in increasing order). There are only two other possible positions of nonzero entries in $A_{[ijp]}$, both having $p$ either as a row or column index. Thus $a_pC$ can be reduced by addition of $R$-multiples of the 
$A_{[ijp]}$ to a strictly upper triangular matrix $C'$ whose only possible nonzero entries are in row $p$ and column~$p$. This 
yields an expression for the zero vector in $R^n$ as an $R$-linear combination of the vectors $v(i,p)$, with $i\neq p$. 
By the normality of $M$, these vectors are linearly independent over $R$, 
so $C'=0$ and $a_pC$ is an $R$-linear combination of the matrices $A_{[ijp]}$. 
\end{proof}

\begin{note} The stated hypothesis cannot be dropped from Theorem \ref{rela}, as the case $n=3$,
$R=\Z/30\Z$, and $(a_1,a_2,a_3)=(6,10,15)$ shows. In this case, the relation $a_3v(1,2)-a_2v(1,3)=0$ cannot be deduced
from (\ref{defrel}) and the vectors $v(2,1)$ and $v(3,1)$ are linearly dependent over $R$.
\end{note}

\medskip

We next offer an alternative proof of Theorem \ref{rela}, valid when $R$ is a principal ideal domain. For this purpose,
set
$$
d=|D|={{m}\choose{1}}{{n-m}\choose{1}}+{{m}\choose{2}}\text{ and }e=|E|={{m}\choose{2}}{{n-m}\choose{1}}+{{m}\choose{3}},
$$
and let $A\in M_{d\times e}(R)$ be the matrix whose columns are the coordinates of the $y(i,j,k)$ relative to the $x(i,j)$ (in both cases, ordered lexicographically). For instance, if $n=4=m$, then 
$$
A=\left(\begin{array}{cccc} 
a_3 & a_4 & 0 & 0\\
-a_2 & 0 & a_4 & 0\\
0 & -a_2 & -a_3 & 0\\
a_1 & 0 & 0 & a_4\\
0 & a_1 & 0 & -a_3\\
0 & 0 & a_1 & a_2\end{array}\right).
$$

Our proof hinges on the Smith Normal Form of~$A$.

 \begin{theorem}\label{rela2} Suppose that $R$ is a principal ideal domain. Then (\ref{defrel}) are
defining relations for $S$.
\end{theorem}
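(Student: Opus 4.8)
The plan is to read off the theorem from the Smith Normal Form of $A$ by a reduction-modulo-primes argument that feeds on Theorem \ref{rela}. Regard $A$ as the matrix of the map $R^e\to X$ sending the basis vector indexed by $(i,j,k)$ to $y(i,j,k)$; its image is $Y$, so $\operatorname{coker}(A)=X/Y$, and as composite maps $\Lambda A=0$ by (\ref{defrel}). Since $\Lambda\colon X\to S$ is onto (Lemma \ref{gen}) and $S$ is free of rank $n-1$, we have $X/\ker\Lambda\cong S$, and $Y\subseteq\ker\Lambda$ gives a surjection $X/Y\twoheadrightarrow X/\ker\Lambda\cong S$. A surjection between free modules of the same finite rank over a commutative ring is an isomorphism; hence it suffices to prove that $X/Y$ is free of rank $n-1$, for then this surjection is an isomorphism and $Y=\ker\Lambda$, which is exactly (\ref{ky}). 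Writing the invariant factors of $A$ as $s_1\mid\cdots\mid s_r$ (with $r=\operatorname{rank}A$), the structure theorem gives $X/Y\cong R^{d-r}\oplus\bigoplus_{t}R/s_tR$, so the goal becomes the twofold assertion that every $s_t$ is a unit and that $r=d-(n-1)$.

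Both halves will follow once I control the rank of $A$ modulo every prime. For a prime ideal $p$ of $R$ set $k=R/p$ and let $\bar{(\ )}$ denote reduction mod $p$; the case $p=0$ covers $k=\operatorname{Frac}(R)$. Since the entries of the transforming matrices in $A=P\Sigma Q$ reduce to units of $k$, one has $\operatorname{rank}_k\bar A=\#\{t:s_t\notin p\}$, the number of invariant factors not killed by $p$. Thus, if I can show
\begin{equation*}
\operatorname{rank}_k\bar A=d-(n-1)\qquad\text{for }k=\operatorname{Frac}(R)\text{ and for every }k=R/p,
\end{equation*}
then, taking $k=\operatorname{Frac}(R)$ gives $r=d-(n-1)$, while constancy of the rank over all $p$ forces every $s_t$ (for $t\le r$) to lie outside every prime, i.e.\ to be a unit. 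This is precisely the twofold assertion above, so the theorem reduces to this single rank computation over residue fields.

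To carry out the rank computation, fix $k=R/p$ and let $M'=\{i\in M:a_i\notin p\}$ be the effective support. Since $I_M=R$ surjects onto $k$, we have $M'\neq\emptyset$ and $\bar I_{M'}=k$, and because $k$ is a domain $M'$ is automatically normal. By Lemma \ref{gen} over $k$ with $M'$, the vectors $\bar v(i,j)$ span the solution space of $\bar a_1X_1+\cdots+\bar a_nX_n=0$, which has dimension $n-1$; hence $\operatorname{rank}_k\bar\Lambda=n-1$ and $\dim_k\ker\bar\Lambda=d-(n-1)$. As $\bar\Lambda\bar A=0$ we have $\operatorname{im}\bar A\subseteq\ker\bar\Lambda$, so it remains only to show that the columns of $\bar A$ span $\ker\bar\Lambda$. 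For this I re-run the reduction in the proof of Theorem \ref{rela} over $k$: given a relation $\bar C$ and any $p_0\in M'$ (so $\bar a_{p_0}$ is invertible in $k$), the matrix $\bar a_{p_0}\bar C$ can be reduced using the columns $\bar A_{[ij p_0]}$—which occur among the columns of $\bar A$, since $p_0\in M$ forces every triple $\{i,j,p_0\}$ into $E$—to a matrix supported only in row and column $p_0$; normality at $p_0$ then makes it vanish, so $\bar a_{p_0}\bar C\in\operatorname{im}\bar A$. Finally $\bar I_{M'}=k$ yields $\sum_{p_0\in M'}\bar c_{p_0}\bar a_{p_0}=1$ for suitable $\bar c_{p_0}\in k$, whence $\bar C=\sum_{p_0}\bar c_{p_0}(\bar a_{p_0}\bar C)\in\operatorname{im}\bar A$. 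Thus $\operatorname{im}\bar A=\ker\bar\Lambda$ and $\operatorname{rank}_k\bar A=d-(n-1)$, as needed.

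The main obstacle is exactly this last rank computation: the argument of Theorem \ref{rela} relies on normality, which can fail modulo $p$ when some $\bar a_i$ vanishes (cf.\ the Note following that theorem). The device that overcomes it is the passage to the effective support $M'$, over which normality is automatic because $R/p$ is a domain, together with the observation that the relations $\bar A_{[ij p_0]}$ needed for the reduction remain available among the columns of $\bar A$ as long as $p_0\in M$. Once the rank is pinned down over every residue field, the Smith Normal Form bookkeeping in the first two paragraphs is purely formal.
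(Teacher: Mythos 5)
Your proof is correct, and it shares the paper's overall architecture --- pass to the cokernel of $A$, use the surjection $X/Y\twoheadrightarrow S\cong R^{n-1}$, and reduce everything to showing that the Smith normal form of $A$ is $\mathrm{diag}(1,\dots,1,0,\dots,0)$ with $d-(n-1)$ ones --- but your method for pinning down the invariant factors is genuinely different. The paper works directly with the gcd-of-minors formula $b_t=D_t/D_{t-1}$: for each $i\in M$ it exhibits an explicit $z\times z$ diagonal submatrix $A(i)$ of $A$ with determinant $\pm a_i^z$ (columns indexed by the triples $\{i,j,k\}$, rows by the pairs $\{j,k\}$), so that $D_z$ divides $\gcd_{i\in M}(a_i^z)$, which is a unit. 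You instead compute $\mathrm{rank}_{R/p}\,\bar A$ for every prime $p$ (and over $\mathrm{Frac}(R)$) and observe that constancy of this rank across all residue fields forces all invariant factors to be units; the rank computation itself is carried out by re-running the column-reduction argument from the proof of Theorem \ref{rela} over the field $R/p$, with the effective support $M'=\{i\in M:\,a_i\notin p\}$ in place of $M$. The two observations that make this work --- that $M'$ is automatically normal over a field since $\bar a_{p_0}\neq 0$ there, and that the relation columns $\bar A_{[ijp_0]}$ needed for the reduction are still among the columns of $\bar A$ because $p_0\in M$ forces $[i,j,p_0]\in E$ for every $(i,j)\in D$ --- are both correct and are exactly the points where a naive reduction mod $p$ of Theorem \ref{rela} would otherwise break (cf.\ the Note after that theorem). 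What your route buys is conceptual economy: Theorem \ref{rela2} is deduced from the mechanism of Theorem \ref{rela} applied over residue fields, making the relationship between the two results transparent. What the paper's route buys is explicitness: it identifies concrete unimodular minors of $A$ with no localization at all.
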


 \begin{proof} We know that $Y\subseteq\ker\Lambda$. Hence, $\Lambda$ induces an epimorphism of 
$R$-modules $\Delta:X/Y\to S$, 
$$
\Delta(x+Y)=\Lambda(x),\quad x\in X.
$$
Here $\ker\Lambda=Y$ if and only if $\Delta$ is injective, and we proceed to show the latter. Let $S(A)$ be the Smith normal form of 
the matrix $A$ defined above, and set $r=\mathrm{rank}(A)$.
We claim that
\begin{equation}
\label{r}
r=d-(n-1)\text{ and }S(A)=\mathrm{diag}(\underbrace{1,\dots,1}_{r},0,\dots,0).
\end{equation}
Indeed, there exist nonzero elements $b_1,\dots,b_r\in R$ such that $b_1|\cdots|b_r$ and
\begin{equation}
\label{smith}
S(A)=\mathrm{diag}(\underbrace{b_1,\dots,b_r}_{r},0,\dots,0).
\end{equation}
Then (\ref{smith}) implies the existence of a basis $\{u_1,\dots,u_d\}$ of $X$
such that $\{b_1u_1,\dots,b_r u_r\}$ is a basis of~$Y$. Thus
$$
X/Y\cong R/Rb_1\oplus\cdots\oplus R/Rb_r\oplus R^{d-r}.
$$
Now $S\cong R^{n-1}$ and $S$ is an epimorphic image of $X/Y$. This readily implies that $d-r\geq n-1$, or
\begin{equation}
\label{t}
r\leq d-(n-1).
\end{equation}
On the other hand, for $1\leq i\leq r$, let $D_i$ be the greatest common divisor of the determinants of all $i\times i$ submatrices
of $A$. It is well-known that
\begin{equation}
\label{d}
D_1|\cdots|D_r\text{ and }
b_1=D_1, b_2=D_2/D_1,\dots,b_r=D_r/D_{r-1}.
\end{equation}
Set
$$
z=d-(n-1)={{m-1}\choose{1}}{{n-m}\choose{1}}+{{m-1}\choose{2}}.
$$
Since $I_M=R$, by virtue of (\ref{t}) and (\ref{d}), in order to show (\ref{r}),
it suffices to show that given any $i\in M$, there is a $z\times z$ diagonal submatrix, say $A(i)$, of $A$ with diagonal entries equal to $\pm a_i$. This is easy: the columns of $A(i)$ are given by all 3-subsets 
$\{i,j,k\}$ of $N$ such that $\{j,k\}\cap M\neq\emptyset$; each of these columns contains at most three nonzero entries, in the rows of the three 2-subsets 
$\{i,j\}$, $\{i,k\}$, and
$\{j,k\}$ of $\{i,j,k\}$, and we select the row corresponding to $\{j,k\}$. Thus, given any 2-subset $\{j,k\}$ of $N$ such that
$i\notin\{j,k\}$ and $\{j,k\}\cap M\neq\emptyset$, 
the matrix $A(i)$ has at most one nonzero entry in column 
$\{j,k\}$, namely $\pm a_i$, in row $\{j,k\}$ (the sign depends on how $\{i,j,k\}$ is ordered).

This proves  (\ref{r}). We next indicate how to use (\ref{r}) to show that $\Delta$ is injective. As mentioned above, there is a basis $\{u_1,\dots,u_d\}$ of $X$ 
such that $\{u_1,\dots,u_r\}$ is a basis of $Y$.
Thus $X/Y$ is a free $R$-module of rank $d-r=n-1$. Hence, there is an isomorphism of $R$-modules 
$\Omega:S\to X/Y$. This yields
the epimorphism of $R$-modules $\Delta\Omega:S\to S$. As a nonzero free $R$-module of finite rank, $S$ is not isomorphic to any of 
its proper quotients.  Thus $\Delta\Omega$ is injective, and therefore so is $\Delta=\Delta\Omega\Omega^{-1}$.
\end{proof}

\section{Structure of $S/S_i$}

In this section we determine the $R$-module structure of $S/S_i$. We note that $S/S_i$ is a module over the ring $R/Ra_i$, since $a_iS\subseteq S_i$. Under suitable hypotheses on $R$, we find that $S/S_i\cong (R/Ra_i)^{n-2}$. In particular, this statement holds if $R$ is a principal ideal domain.

For convenience, we assume in our analysis that $i=1$. We write $\overline{R}$ for the quotient ring $R/Ra_1$, and for $a\in R$ 
we write $\bar{a}$ for the element $a+Ra_1$ of $\overline{R}$. We begin by showing that $S/S_1$ is isomorphic to a submodule of 
$\overline{R}^{n-1}$. We define $\theta :S\to \overline{R}^{n-1}$ by $\theta (x_1,\dots ,x_n)=(\bar{x_2},\dots ,\bar{x_n})$.

\begin{lemma}\label{theta}
The kernel of $\theta$ is $S_1$.
\end{lemma}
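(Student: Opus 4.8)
The plan is to establish the two inclusions $S_1\subseteq\ker\theta$ and $\ker\theta\subseteq S_1$ separately. The first is immediate from the shape of the generators: each $v(1,j)$ has all of its entries in positions $2,\dots,n$ equal to either $0$ or $a_1$, so every one of those entries lies in $Ra_1$ and reduces to $\bar 0$. Hence $\theta(v(1,j))=0$ for each $j\neq 1$, and by $R$-linearity $S_1\subseteq\ker\theta$.

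For the reverse inclusion I would begin with an arbitrary $(x_1,\dots,x_n)\in\ker\theta$. The condition that $\theta(x_1,\dots,x_n)=0$ says precisely that $\bar{x_j}=0$, i.e. $x_j\in Ra_1$, for each $j\geq 2$; so I may write $x_j=c_j a_1$ with $c_j\in R$. The natural move is to subtract off the obvious candidate combination and measure the error. Setting $w=(x_1,\dots,x_n)-\sum_{j=2}^n c_j v(1,j)$, a direct coordinate computation shows that $w$ has all its entries in positions $2,\dots,n$ equal to zero, so $w=(w_1,0,\dots,0)$ with $w_1=x_1+\sum_{j=2}^n c_j a_j$. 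Since $w$ is a difference of elements of $S$, it lies in $S$, and substituting into (\ref{ecu}) forces $a_1 w_1=0$.

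The main obstacle is that $a_1$ may be a zero divisor, so $a_1 w_1=0$ does not by itself yield $w_1=0$, and one cannot simply conclude $w=0$. The key idea to circumvent this is to exploit unimodularity. Writing $1=r_1 a_1+\cdots+r_n a_n$ and multiplying on the right by $w_1$, the term $r_1 a_1 w_1$ vanishes, leaving $w_1=\sum_{j=2}^n r_j a_j w_1$. Setting $t_j=r_j w_1$ and using that each $a_i$ is central, I get $t_j a_1=r_j w_1 a_1=0$ while $w_1=\sum_{j=2}^n t_j a_j$. A final coordinate check then shows $w=-\sum_{j=2}^n t_j v(1,j)$, which exhibits $w$ as an element of $S_1$; consequently $(x_1,\dots,x_n)=\sum_{j=2}^n (c_j-t_j)\,v(1,j)\in S_1$.

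Together the two inclusions give $\ker\theta=S_1$. I expect the only genuinely delicate point to be the zero-divisor issue just described, which is exactly why unimodularity is invoked; the remainder is bookkeeping of coordinates, with the centrality of the $a_i$ used precisely to commute $a_1$ past $w_1$ and the $a_j$ past the coefficients.
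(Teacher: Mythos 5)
Your proof is correct and follows essentially the same route as the paper's: reduce an element of $\ker\theta$ modulo the obvious combination $\sum_j c_j v(1,j)$ to a vector $(w_1,0,\dots,0)$ with $a_1w_1=0$, then use unimodularity (and centrality of the $a_i$) to write that vector as $-\sum_j t_jv(1,j)\in S_1$. No changes needed.
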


\begin{proof}
It is immediate that $v(1,j)\in\ker\theta$ for $j\ge 2$, hence $S_1\subseteq\ker\theta$. 
On the other hand, suppose that $s\in\ker\theta$. Then for $i\ge 2$, the $i$th component of $s$ is $c_ia_1$, where $c_i\in R$. We write 
$s' = \sum_{j=2}^n c_j v(1,j)$. Then $s'\in S_1$ and $s-s'$ is an element of $\ker\theta$ whose first entry is some $b\in R$ with 
$a_1b=0$, and whose subsequent entries are all zero. There exist elements $d_1,\dots ,d_n$ of $R$ for which 
$\sum_{j=1}^n d_ja_j=1$, so $b=\sum_{j=1}^n d_ja_jb = \sum_{j=2}^n d_jba_j$. Then
$$
-\sum_{j=2}^n d_jb v(1,j) = (b,-d_2ba_1,\dots ,-d_nba_1) = (b,0,\dots ,0). 
$$
Thus $s\in S_1$ and $\ker\theta = S_1$. 
\end{proof}

The image of $\theta$ is the submodule $\overline{S}$ of $\overline{R}^{n-1}$ consisting of all $(\bar{x_2},\dots ,\bar{x_n})$ for which 
$$
\bar{a_2}\bar{x_2}+\dots +\bar{a_n}\bar{x_n}=0.
$$

Note that $(\bar{a_2},\dots ,\bar{a_n})$ is a unimodular vector in $\bar{R}^{n-1}$, since $(a_1,a_2,\dots ,a_n)$ is unimodular 
in~$R^n$. We wish to identify conditions under which $\overline{S}$ is a free $\overline{R}$-module of rank $n-2$. We assume
for the remainder of the paper that $R$ is commutative. According to \cite{L}, $R$ is said to be a $K$-Hermite ring if for every 
$m\geq 1$ and $(b_1,\dots,b_m)\in R^m$, there exist $Q\in\mathrm{GL}_m(R)$ and $d\in R$ such that $(b_1,\dots,b_m)Q = (d,0,\dots,0)$.
Note that this implies $Rb_1+\cdots+Rb_m=Rd$. If $R$ is a principal ideal domain and the Smith Normal form of $(b_1,\dots,b_m)$
is $(d,0,\dots,0)$, then $(b_1,\dots,b_m)Q = (d,0,\dots,0)$ for some $Q\in\mathrm{GL}_m(R)$, so $R$ is a $K$-Hermite ring.
A detailed discussion of $K$-Hermite rings can be found in  \cite[Chapter 1]{L}.

\begin{theorem}\label{S/S_1}
  If $R$ is a $K$-Hermite ring and $i\in\{1,\dots ,n\}$, then $S/S_i$ is isomorphic to $(R/Ra_i)^{n-2}$.
\end{theorem}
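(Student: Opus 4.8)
The plan is to combine Lemma~\ref{theta} with a single change of variables supplied by the $K$-Hermite hypothesis. Following the convention already fixed, I take $i=1$, so $\overline{R}=R/Ra_1$. By Lemma~\ref{theta} and the first isomorphism theorem, $S/S_1\cong\mathrm{im}\,\theta=\overline{S}$, the submodule of $\overline{R}^{n-1}$ cut out by the single equation $\bar{a_2}\bar{x_2}+\cdots+\bar{a_n}\bar{x_n}=0$. It therefore suffices to show that $\overline{S}$ is free of rank $n-2$ over $\overline{R}$; the general case $S/S_i\cong(R/Ra_i)^{n-2}$ then follows by relabelling the coordinates.

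The tempting move is to diagonalise the coefficient vector $(\bar{a_2},\dots,\bar{a_n})$ directly over $\overline{R}$, but there is no reason for $\overline{R}$ itself to be $K$-Hermite, so I would instead work over $R$ first and then reduce. Applying the $K$-Hermite property to $(a_2,\dots,a_n)\in R^{n-1}$ yields $Q\in\GL_{n-1}(R)$ and $d\in R$ with $(a_2,\dots,a_n)Q=(d,0,\dots,0)$ and $Ra_2+\cdots+Ra_n=Rd$.

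The crucial observation, and the only place where full unimodularity of $(a_1,\dots,a_n)$ enters, is that although $(a_2,\dots,a_n)$ need not be unimodular over $R$, we do have $Ra_1+Rd=Ra_1+\cdots+Ra_n=R$. Hence, passing to $\overline{R}$, the element $\bar{d}$ generates $\overline{R}$ and so is a unit, while $\bar{Q}$ remains invertible in $\GL_{n-1}(\overline{R})$ (its determinant is the image of a unit) and satisfies $(\bar{a_2},\dots,\bar{a_n})\bar{Q}=(\bar{d},0,\dots,0)$. I expect this point, namely recognising that reduction modulo $Ra_1$ restores invertibility of the pivot $\bar{d}$, to be the main obstacle; the rest is formal.

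Finally I would substitute $\bar{x}=\bar{Q}\bar{y}$, with $\bar{y}=(\bar{y_2},\dots,\bar{y_n})$, into the defining equation of $\overline{S}$. This turns $\bar{a_2}\bar{x_2}+\cdots+\bar{a_n}\bar{x_n}=0$ into $\bar{d}\,\bar{y_2}=0$, which, since $\bar{d}$ is a unit, is equivalent to $\bar{y_2}=0$. As $\bar{x}\mapsto\bar{Q}^{-1}\bar{x}$ is an automorphism of $\overline{R}^{n-1}$ carrying $\overline{S}$ onto $\{\bar{y}:\bar{y_2}=0\}\cong\overline{R}^{n-2}$, I conclude $\overline{S}\cong\overline{R}^{n-2}$ and therefore $S/S_1\cong(R/Ra_1)^{n-2}$, as required.
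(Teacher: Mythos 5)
Your proposal is correct and follows essentially the same route as the paper: both reduce to $\overline{S}$ via Lemma~\ref{theta}, apply the $K$-Hermite property to $(a_2,\dots,a_n)$ over $R$, use $Ra_1+Rd=R$ to see that $\bar{d}$ becomes a unit in $\overline{R}$, and then use the invertible change of coordinates $\bar{Q}$ to identify $\overline{S}$ with a free module of rank $n-2$. The paper packages the change of coordinates as the inverse of a matrix with first row $(a_2,\dots ,a_n)$ rather than as the substitution $\bar{x}=\bar{Q}\bar{y}$, but this is only a cosmetic difference.
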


\begin{proof} We may assume without loss that $i=1$. There are $d\in R$ and $Q\in\mathrm{GL}(n-1,R)$ such that 
$(a_2,\dots,a_n)= (d,0,\dots,0)Q$. Thus $(a_2,\dots,a_n)$ is the first row of a matrix $A\in M_{n-1}(R)$ whose determinant is $du$, with $u\in U(R)$. Suppose first $a_1\in U(R)$. Then $S=S_1$. Suppose next $a_1\notin U(R)$ and let 
$\overline{A}\in M_{n-1}(\overline{R})$ be the matrix corresponding to  $A$. Since 
$Ra_1+Rd=R$, the determinant $\bar{d}$ of $\overline{A}$ is a unit in $\overline{R}$, and $\overline{A}$ has an  inverse 
$B\in M_{n-1}(\overline{R})$. Thus columns 2 through $n-1$ of $B$ yield a basis of $\overline{S}$, 
whence $S/S_1\cong \overline{R}^{n-2}$ by Lemma \ref{theta}.
\end{proof}

\section{Constructing an $R$-basis of $S$}

We suppose for the remainder of the paper that $R$ is an integral domain with field of fractions~$F$,
and let $T$ be the subspace
of $F^n$ of all solutions to (\ref{ecu}). 

Since $\gcd(a_1,\dots,a_n)=1$, some $a_i\neq 0$ and we assume for
notational convenience that $a_1\neq 0$. Then the following vectors form an $F$-basis of $T$:
\begin{equation}
\label{wi}
w(1,2)=(-a_2/a_1, 1, 0,\dots,0),w(1,3)=(-a_3/a_1, 0, 1, 0,\dots,0),\dots,
w(1,n)=(-a_n/a_1, 0,\dots,0,1),
\end{equation}
and we let $W$ be the $R$-span of the vectors $w(1,2),\dots,w(1,n)$. Given $\alpha_2,\dots,\alpha_n\in F$, we have
\begin{equation}
\label{inS}
\alpha_2w(1,2)+\cdots+\alpha_n w(1,n)\in S\Leftrightarrow \alpha_2,\dots,\alpha_n\in R\text{ and }
a_2\alpha_2+\cdots+a_n\alpha_n\equiv 0\mod a_1.
\end{equation}
This implies, in particular, that $S\subseteq W$.

\begin{theorem} 
\label{ws} Suppose $a_1\neq 0$ and let $W$ be the $R$-span of the vectors (\ref{wi}). Then $W/S\cong R/Ra_1$.
\end{theorem}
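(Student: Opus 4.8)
The plan is to show that the quotient map $W \to W/S$ factors through a natural surjection onto $R/Ra_1$ with kernel exactly $S$. First I would exploit the characterization in \eqref{inS}. Since the vectors $w(1,2),\dots,w(1,n)$ form an $F$-basis of $T$ and $W$ is their $R$-span, every element of $W$ is uniquely $\alpha_2 w(1,2) + \cdots + \alpha_n w(1,n)$ with $\alpha_2,\dots,\alpha_n \in R$. By \eqref{inS}, such an element lies in $S$ precisely when $a_2\alpha_2 + \cdots + a_n\alpha_n \equiv 0 \pmod{a_1}$ (the membership conditions $\alpha_i \in R$ being automatic here). This immediately suggests the map $\varphi: W \to R/Ra_1$ sending $\alpha_2 w(1,2)+\cdots+\alpha_n w(1,n)$ to the residue class of $a_2\alpha_2 + \cdots + a_n\alpha_n$.

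The key steps are then: (1) verify that $\varphi$ is a well-defined $R$-module homomorphism, which follows from the uniqueness of the coordinates $\alpha_i$ (guaranteed because the $w(1,j)$ are $F$-linearly independent) together with $R$-linearity of the expression $a_2\alpha_2+\cdots+a_n\alpha_n$; (2) identify $\ker\varphi = S$, which is exactly the content of \eqref{inS}; and (3) prove $\varphi$ is surjective. For surjectivity I would use unimodularity of $(a_1,\dots,a_n)$: there exist $d_1,\dots,d_n \in R$ with $d_1 a_1 + \cdots + d_n a_n = 1$, so setting $\alpha_j = d_j$ for $j\geq 2$ gives $\varphi$ of the corresponding element equal to $\overline{a_2 d_2 + \cdots + a_n d_n} = \overline{1 - d_1 a_1} = \overline{1}$ in $R/Ra_1$. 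Since $\overline{1}$ generates $R/Ra_1$, this establishes surjectivity. The first isomorphism theorem then yields $W/S \cong R/Ra_1$.

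The one point requiring a little care is the well-definedness of $\varphi$, which hinges on the coordinates $\alpha_2,\dots,\alpha_n$ of an element of $W$ being genuinely unique as elements of $R$. This is where I would invoke that $w(1,2),\dots,w(1,n)$ are $F$-linearly independent (they are, being an $F$-basis of $T$), so the representation is unique over $F$ and a fortiori over $R$. With that in hand, the remaining verifications are routine, so I do not expect a substantial obstacle; the proof is essentially a direct application of \eqref{inS} combined with the unimodularity hypothesis for surjectivity.
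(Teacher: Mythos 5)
Your proof is correct and takes essentially the same route as the paper: the paper's map $\theta(t)=a_1t_1+Ra_1$ is, up to sign, exactly your $\varphi$, since the first coordinate of $\alpha_2w(1,2)+\cdots+\alpha_nw(1,n)$ is $-\tfrac{1}{a_1}(a_2\alpha_2+\cdots+a_n\alpha_n)$. The paper sidesteps the well-definedness discussion by phrasing the map in terms of the first entry of the vector rather than its coordinates, but kernel identification and the unimodularity argument for surjectivity are identical.
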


\begin{proof}
We define a mapping $\theta :W\to R/Ra_1$ by $\theta (t)=a_1t_1+Ra_1$, for $t=(t_1,\dots ,t_n)\in W$. Here $t_1\in F$ and $a_1t_1\in R$.
The kernel of $\theta$ consists of those $t$ for which $t_1\in R$, which is exactly~$S$. Finally, there exist elements $c_1,\dots ,c_n\in R$ with
$
c_1a_1+c_2a_2+\dots +c_na_n = 1.
$
The vector $-(c_2w_2+\dots +c_nw_n)$ has $\frac{1}{a_1}(c_2a_2+\dots +c_na_n)$ as its first entry, and its image under $\theta$ is
$$
(c_2a_2+\dots +c_na_n)+Ra_1 = (1-c_1a_1)+Ra_1 = 1+Ra_1. 
$$
Since the image of $\theta$ is an $R$-module containing $1+Ra_1$, it is equal to $R/Ra_1$, and we conclude that $W/S\cong R/Ra_1$. 
\end{proof}

We assume for the remainder of the paper that $R$ is a principal ideal domain. Then $S$ is a free $R$-module of rank $n-1$ and
the goal of this section is to construct an $R$-basis of $S$. 

\begin{cor}\label{quot2} There is a basis $\{u_1,\dots,u_{n-2},u_{n-1}\}$ of $W$ such that
$\{u_1,\dots,u_{n-2},a_1 u_{n-1}\}$ is a basis of $S$ (and therefore $S/S_1=S/a_1 W\cong (R/a_1 R)^{n-2}$).
\end{cor}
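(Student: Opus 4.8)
The plan is to obtain this as a corollary of Theorem \ref{ws} via the structure theorem for submodules of a free module over a principal ideal domain (the ``stacked basis'' theorem). First I would record the two ingredients it needs. On the one hand, $S\subseteq W$, as noted just before Theorem \ref{ws}. On the other hand, both $S$ and $W$ are free of rank $n-1$: the vectors $w(1,2),\dots,w(1,n)$ are a basis of $W$, and since $a_1\neq 0$ the quotient $W/S\cong R/Ra_1$ furnished by Theorem \ref{ws} is a torsion module, so $S$ has full rank $n-1$ in $W$.

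Next I would apply the stacked basis theorem to $S\subseteq W$: there is a basis $u_1,\dots,u_{n-1}$ of $W$ and elements $d_1\mid d_2\mid\cdots\mid d_{n-1}$ of $R$, all nonzero because $S$ has full rank, such that $d_1u_1,\dots,d_{n-1}u_{n-1}$ is a basis of $S$. This gives $W/S\cong\bigoplus_{i=1}^{n-1}R/Rd_i$, and comparison with Theorem \ref{ws} yields $\bigoplus_{i=1}^{n-1}R/Rd_i\cong R/Ra_1$. The step needing genuine care, and what I regard as the main obstacle, is extracting from this isomorphism that $d_1,\dots,d_{n-2}$ are units and $d_{n-1}$ is an associate of $a_1$. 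Since $R/Ra_1$ is cyclic while $R/Rd_i\oplus R/Rd_{i+1}$ is non-cyclic as soon as both $d_i$ and $d_{i+1}$ are non-units (their gcd being $d_i\neq 1$), at most one $d_i$ can fail to be a unit; by the divisibility chain that one must be $d_{n-1}$, and uniqueness of invariant factors identifies it as an associate of $a_1$. (The degenerate case $a_1\in U(R)$, where $S=W$ and all $d_i$ are units, is covered by the same conclusion.)

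It then remains to discharge the unit bookkeeping. Writing $d_{n-1}=a_1v$ with $v\in U(R)$, I would replace $u_i$ by $d_iu_i$ for $i\leq n-2$, which is legitimate since each such $d_i$ is a unit, and replace $u_{n-1}$ by $vu_{n-1}$. These substitutions again give a basis of $W$, and with this relabelling the set $\{u_1,\dots,u_{n-2},a_1u_{n-1}\}$ coincides with $\{d_1u_1,\dots,d_{n-1}u_{n-1}\}$, hence is a basis of $S$, which is exactly the asserted statement.

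Finally, for the parenthetical claim I would first check that $S_1=a_1W$: a direct computation gives $a_1w(1,j)=v(1,j)$ for every $j\geq 2$, so the span $a_1W$ of the $a_1w(1,j)$ equals the span $S_1$ of the $v(1,j)$. With the basis above in hand, $a_1W$ is generated by $a_1u_1,\dots,a_1u_{n-1}$; here $a_1u_{n-1}$ is precisely the last basis vector of $S$, while $a_1u_1,\dots,a_1u_{n-2}$ are the $a_1$-multiples of the first $n-2$ basis vectors of $S$. Reading off the quotient in this basis gives $S/S_1=S/a_1W\cong(R/a_1R)^{n-2}$, consistent with Theorem \ref{S/S_1}.
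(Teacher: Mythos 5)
Your proof is correct and follows essentially the same route as the paper: both apply the stacked basis theorem to the pair $S\subseteq W$, compare $\bigoplus_i R/Rd_i\cong W/S\cong R/Ra_1$ from Theorem \ref{ws}, and invoke uniqueness of invariant factors to conclude $d_1,\dots,d_{n-2}$ are units and $d_{n-1}$ is an associate of $a_1$. Your extra care with the unit normalization and the explicit check that $S_1=a_1W$ via $a_1w(1,j)=v(1,j)$ are details the paper leaves implicit, but the argument is the same.
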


\begin{proof} Since $S\subseteq W$ are free $R$-modules of rank $n-1$, there are bases $\{u_1,\dots,u_{n-1}\}$
and $\{v_1,\dots,v_{n-1}\}$ of $W$ and $S$, respectively, as well as nonzero elements 
$d_1,\dots,d_{n-1}$, unique up to multiplication by units,
such that $v_i=d_i u_i$ and $d_1|\cdots|d_{n-1}$. Thus, by Theorem \ref{ws}, we have
$$
R/Rd_1\oplus\cdots\oplus R/Rd_{n-1}\cong W/S\cong R/Ra_1.
$$
Since the invariant factors of $W/S$ are uniquely determined, it follows that $$d_1=\cdots=d_{n-2}=1,\; d_{n-1}=a_1.$$
Thus $\{u_1,\dots,u_{n-2},a_1 u_{n-1}\}$ is a basis of $S$.
\end{proof}

Recall that $M$ stands for a subset of $N$ such that $I_M=R$.
In practice, it is convenient to select $M$ to be minimal subject to this property, but in our theory this is not relevant
and we do not assume it.
By reordering the variables in (\ref{ecu}), and for notational convenience, we will assume that 
$M=\{a_1,\dots,a_m\}$, where $m=|M|$. If $m=1$ then $\{v(1,2),\dots,v(1,n)\}$
is already a basis of $S$. Suppose next $m=2$ (since $\gcd(a_1,\dots,a_n)=1$, provided some $a_i$ is a prime power,
we may certainly choose $M$ so that $|M|=2$ and $I_M=R$). Since $\gcd(a_1,a_2)=1$ in this case, we can find $b\in R$ so that 
$a_2b\equiv 1\mod a_1$. Set $c_i=-a_ib\in R$, $3\leq i\leq n$. Then, as shown below, the vectors
\begin{equation}\label{zi}
z_2=a_1w(1,2), z_3=c_3w(1,2)+w(1,3),\dots, z_n=c_n w(1,2)+w(1,n)
\end{equation}
form a basis of $S$. The general case is an extension of this one. We require the following result.

\begin{lemma}\label{Z0} Let $X\leq Y\leq Z$ be free $R$-modules of finite rank $r\geq 1$, with bases 
$\{x_1,\dots,x_r\}$, $\{y_1,\dots,y_r\}$, and $\{z_1,\dots,z_r\}$, respectively. Let $A\in M_r(R)$ (resp. 
$B\in M_r(R)$) be the matrix whose $j$th column is formed by the coordinates of $x_j$ (resp. $y_j$) relative to 
$\{z_1,\dots,z_r\}$, and suppose that $|A|=|B|$. Then $X=Y$.
 \end{lemma}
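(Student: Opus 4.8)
The plan is to compare the three bases through their transition matrices and extract the conclusion from a single determinant identity. First I would introduce the matrix $P\in M_r(R)$ expressing the $x_j$ in terms of the $y_i$: because $X\leq Y$, each $x_j=\sum_i P_{ij}y_i$ with $P_{ij}\in R$. Substituting $y_i=\sum_k B_{ki}z_k$ gives $x_j=\sum_k (BP)_{kj}z_k$, and comparing with $x_j=\sum_k A_{kj}z_k$ shows that $A=BP$ as matrices over $R$. Taking determinants, $|A|=|B|\,|P|$.

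The key observation is that $|B|\neq 0$. Indeed, $Y$ is free of rank $r$ and sits inside $Z$, also free of rank $r$; hence the columns of $B$, being the coordinate vectors of a basis of a rank-$r$ submodule, are linearly independent over the field of fractions $F$, so $\det B\neq 0$. Now I invoke the hypothesis $|A|=|B|$ together with the fact that $R$ is an integral domain: from $|B|=|A|=|B|\,|P|$ I may cancel the nonzero factor $|B|$ to conclude $|P|=1$. Thus $P\in\GL_r(R)$, so $P^{-1}\in M_r(R)$, and the relation $y_i=\sum_j (P^{-1})_{ji}x_j$ exhibits each $y_i$ as an $R$-linear combination of the $x_j$. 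Therefore $Y\subseteq X$, and combined with the given inclusion $X\leq Y$ this yields $X=Y$.

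The only genuinely substantive point is the nonvanishing of $|B|$, which is exactly what licenses the cancellation in the domain $R$; the remainder is routine bookkeeping with transition matrices. I would remark that the argument uses only that $R$ is a commutative integral domain: cancellation of $|B|$ needs the domain property, and $\det P=1$ forces $P$ to be invertible over any commutative ring, so the full principal-ideal-domain hypothesis is not required for this lemma.
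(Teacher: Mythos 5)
Your proof is correct and follows essentially the same route as the paper's: introduce the transition matrix from the $x_j$ to the $y_i$ (your $P$, the paper's $C$), obtain $A=BP$, use $|B|\neq 0$ to cancel and get $|P|=1$, hence $P\in\GL_r(R)$ and $Y\subseteq X$. Your closing remark that only the integral-domain hypothesis is used is accurate but does not change the argument.
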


\begin{proof} Let $C\in M_r(R)$ be the matrix whose $j$th column is formed by the coordinates of $x_j$ relative to 
$\{y_1,\dots, y_r\}$. Then $A=BC$, so $|B|=|A|=|B||C|$. The columns of $B$ are linearly independent, so $|B|\neq 0$,
and hence $|C|=1$. Thus $C\in\GL_r(R)$, and therefore all $y_j$ are in $X$.
\end{proof}

\begin{cor}\label{Z} Let $Z$ be an $R$-submodule of $S$ having a basis
$\{z_2,\dots,z_n\}$ such that the determinant of the matrix whose $j$th column is formed
by the coordinates $z_j$ relative to $\{w(1,2),\dots,w(1,n)\}$ is equal to $a_1$. Then $S=Z$ (and hence $\{z_2,\dots,z_n\}$ is
is a basis of $S$).
\end{cor}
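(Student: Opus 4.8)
The plan is to deduce Corollary \ref{Z} directly from Lemma \ref{Z0}. The key observation is that both $Z$ and $S$ are free $R$-modules of rank $n-1$ sitting inside $W$, which is itself a free $R$-module of rank $n-1$ with the distinguished basis $\{w(1,2),\dots,w(1,n)\}$. So the chain $Z\leq S\leq W$ fits the hypotheses of Lemma \ref{Z0} with $r=n-1$, provided we can verify that the relevant two determinants agree.

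First I would set up the matrices required by Lemma \ref{Z0}, taking $X=Z$, $Y=S$, and $Z_{\mathrm{Lemma}}=W$ (the ambient module), with the basis $\{w(1,2),\dots,w(1,n)\}$ playing the role of $\{z_1,\dots,z_r\}$. The matrix $A$ is then exactly the matrix whose $j$th column records the coordinates of $z_j$ relative to $\{w(1,2),\dots,w(1,n)\}$, and by hypothesis $|A|=a_1$. It remains to produce a basis of $S$ and compute the determinant $|B|$ of the corresponding change-of-coordinates matrix, and to check it equals $a_1$ as well.

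The main step, and the one requiring a little care, is to exhibit an explicit basis of $S$ relative to which the transition matrix into $\{w(1,2),\dots,w(1,n)\}$ has determinant $a_1$. Here I would invoke Corollary \ref{quot2}: there is a basis $\{u_1,\dots,u_{n-1}\}$ of $W$ such that $\{u_1,\dots,u_{n-2},a_1u_{n-1}\}$ is a basis of $S$. Let $P$ be the matrix expressing $\{u_1,\dots,u_{n-1}\}$ in terms of $\{w(1,2),\dots,w(1,n)\}$; since both are bases of $W$, we have $P\in\GL_{n-1}(R)$, so $|P|\in U(R)$. The transition matrix $B$ for the basis $\{u_1,\dots,u_{n-2},a_1u_{n-1}\}$ of $S$ is obtained from $P$ by scaling its last column by $a_1$, whence $|B|=a_1|P|$, a unit multiple of $a_1$. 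Strictly speaking this shows $|B|$ and $|A|=a_1$ agree only up to a unit, which is not literally the equality demanded by Lemma \ref{Z0}.

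The hard part is precisely this unit discrepancy, and the cleanest fix is to replace the verbatim appeal to Lemma \ref{Z0} by the argument inside its proof, which tolerates it. Writing $C$ for the matrix of coordinates of the $z_j$ relative to the $S$-basis $\{u_1,\dots,u_{n-2},a_1u_{n-1}\}$, we have $A=BC$, so $a_1=|A|=|B|\,|C|=a_1|P|\,|C|$. Since $R$ is a domain and $a_1\neq 0$, we may cancel $a_1$ to get $|P|\,|C|=1$, forcing $|C|\in U(R)$ and hence $C\in\GL_{n-1}(R)$. Therefore every $S$-basis vector lies in $Z$, giving $S\subseteq Z$; as $Z\subseteq S$ by hypothesis, we conclude $S=Z$ and $\{z_2,\dots,z_n\}$ is a basis of $S$. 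An alternative, more economical route would be to observe that $Z\leq S$ already forces $|A|$ to be a multiple of $|B|$ up to a unit, so that $Z=S$ follows as soon as the two index $[W:Z]$ and $[W:S]$ coincide up to a unit; but running the $A=BC$ cancellation is the most transparent way to finish.
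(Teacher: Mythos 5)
Your proof is correct and follows the same route as the paper, whose entire proof of Corollary \ref{Z} is the single sentence ``This follows from Corollary \ref{quot2} and Lemma \ref{Z0}''; you have supplied exactly the details the authors leave implicit. The unit discrepancy you flag is real but harmless --- the argument inside Lemma \ref{Z0} only ever needs $|C|\in U(R)$ rather than $|C|=1$ (alternatively one may rescale $u_1$ so that $|P|=1$) --- and your $A=BC$ cancellation, using that $R$ is a domain and $a_1\neq 0$, resolves it cleanly.
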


\begin{proof} This follows from Corollary \ref{quot2} and Lemma \ref{Z0}.
\end{proof}

Corollary \ref{Z} readily implies that the vectors (\ref{zi}) form a basis of $S$. Indeed, we see from (\ref{inS})
all vectors the (\ref{zi}) are in $S$, so their span, say $Z$, is contained in $S$.
Moreover, the corresponding matrix has determinant $a_1$, as required. 

We now state our main theorem on the direct construction of a basis of $S$ from the elements
$a_1,\dots,a_n$. We write $(r_1,\dots,r_t)$
for the greatest common divisor of the elements $r_1,\dots,r_t\in R$.

\begin{theorem}\label{basis} Suppose $M=\{a_1,\dots,a_m\}$ satisfies $I_M=R$, where $a_1\neq 0$ and $1<m\leq n$.
Then we can construct matrices $X\in M_{m-1}(R)$, $Y\in M_{m-1,n-m}(R)$ and 
\begin{equation}
\label{matriz}
A=\left(\begin{array}{cc} X & Y\\ 0 & I\end{array}\right)\in M_{n-1}(R),
\end{equation}
such that $X$ is upper triangular with diagonal entries
\begin{equation}\label{di}
a_1/(a_1,a_2), (a_1,a_2)/(a_1,a_2,a_3),\dots,(a_1,\dots,a_{m-2})/(a_1,\dots,a_{m-1}), (a_1,\dots,a_{m-1}),
\end{equation}
and the columns of $A$ are the coordinates relative to $w(1,2),\dots,w(1,n)$ of a basis $z_2,\dots,z_n$ of $S$.
\end{theorem}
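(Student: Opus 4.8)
The plan is to build the matrix $A$ explicitly and then invoke Corollary \ref{Z}, which reduces everything to checking that the columns of $A$ give vectors lying in $S$ and that $\det A = a_1$. Since $A$ has the block lower-triangular form (\ref{matriz}) with an identity block, we have $\det A = \det X$, and because $X$ is upper triangular its determinant is the product of the diagonal entries in (\ref{di}). That product telescopes: writing $g_k = (a_1,\dots,a_k)$ with $g_1 = a_1$, the diagonal entries are $g_1/g_2, g_2/g_3, \dots, g_{m-2}/g_{m-1}, g_{m-1}$, whose product is $g_1 = a_1$. So the determinant condition of Corollary \ref{Z} will hold automatically once the matrix is correctly assembled, and the only real work is the construction of $X$ and $Y$ so that the associated vectors $z_2,\dots,z_n$ actually lie in $S$.

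First I would set up notation for the construction. Column $j$ of $A$ encodes $z_j$ as $\sum_k A_{kj}\, w(1,k+1)$, and by the criterion (\ref{inS}) a combination $\sum_{k=2}^n \alpha_k w(1,k)$ lies in $S$ exactly when all $\alpha_k\in R$ and $a_2\alpha_2 + \cdots + a_n\alpha_n \equiv 0 \pmod{a_1}$. For the last $n-m$ columns, the $\binom{I}{}$ block says $z_{m+1},\dots,z_n$ have the form $z_{m+j} = (\text{entries from } Y) + w(1,m+j)$, so for each such column I must choose the entries of the $j$th column of $Y$ so that the congruence holds; because $a_{m+1},\dots,a_n\in I_M = Ra_1+\cdots+Ra_m$, each $a_{m+j}$ is an $R$-combination $\sum_{i=1}^m t_i a_i$, and this lets me cancel the unwanted $a_{m+j}$ contribution modulo $a_1$ using the columns indexed by $2,\dots,m$. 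The upper-triangular block $X$ should be built so that column $j$ (for $2\le j\le m$) realizes the successive gcd reductions: the diagonal entry $g_{j-1}/g_j$ together with carefully chosen entries above it arranges that $a_2,\dots,a_m$ combine to the correct multiple of $a_1$. Concretely I would proceed inductively on the columns, at each stage using a Bézout relation expressing $g_j$ as an $R$-combination of $g_{j-1}$ and $a_j$ to define the new column's entries.

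The cleanest way to present $X$ is inductively. At step $j$ (from $j=2$ up to $m$) I have already defined columns realizing vectors whose ``$a$-weighted sum'' $a_2\alpha_2+\cdots+a_m\alpha_m$ equals the appropriate multiple of the running gcd $g_{j-1}$; I then pick $s,t\in R$ with $s\,g_{j-1} + t\,a_j = g_j$ and use these to define the diagonal entry $g_{j-1}/g_j$ (as the scaling making the weighted sum divisible by $a_1$ after the final column) and the off-diagonal entries in the new column. I would verify by the induction hypothesis and the Bézout identity that each resulting $z_j$ satisfies the congruence in (\ref{inS}), hence lies in $S$. For the $m=2$ case this recovers exactly the vectors (\ref{zi}), which serves as both the base of the induction and a sanity check, since there $X=(a_1)$ and $Y$ has entries $c_i = -a_i b$.

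The main obstacle I anticipate is the bookkeeping in the upper-triangular block $X$: ensuring simultaneously that (i) the diagonal entries come out to be precisely the prescribed ratios of consecutive gcds in (\ref{di}), (ii) the off-diagonal entries can be chosen in $R$ (not just in $F$), and (iii) each column's weighted sum is divisible by $a_1$ so that the corresponding $z_j$ genuinely lies in $S$. These three requirements are coupled, and getting a single coherent recursive definition of the entries that satisfies all of them is the delicate part; the Bézout coefficients at each stage are the key tool, and I would need to track how the partial weighted sums propagate through the recursion. Once that is set up correctly, the determinant computation is immediate from the telescoping product, and Corollary \ref{Z} finishes the proof with no further effort.
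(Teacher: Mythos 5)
Your framework is exactly the paper's: reduce via Corollary \ref{Z}, note that $\det A=\det X$ telescopes to $a_1$, and handle the columns of $Y$ by writing each $a_{m+j}$ as an $R$-combination of $a_1,\dots,a_m$. Where the proposal stops short is precisely the step you flag as ``delicate'': you never actually produce the off-diagonal entries of $X$, and the recursive B\'ezout scheme you sketch (tracking how ``partial weighted sums propagate'' from column to column) is both unexecuted and more elaborate than what is needed. The missing observation is a one-liner, and it decouples the three requirements you worry about. Fix a column $i$ of $X$ with $2\le i\le m-1$; by (\ref{inS}) you need
\[
a_2X_{1,i}+\cdots+a_iX_{i-1,i}+a_{i+1}\,\frac{(a_1,\dots,a_i)}{(a_1,\dots,a_{i+1})}\equiv 0 \pmod{a_1},
\]
where for $i=m-1$ one uses $(a_1,\dots,a_m)=1$ so that the diagonal entry $(a_1,\dots,a_{m-1})$ fits the same pattern. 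The last term is $(a_1,\dots,a_i)$ times the ring element $a_{i+1}/(a_1,\dots,a_{i+1})$, hence is a multiple of $(a_1,\dots,a_i)$; since $(a_1,\dots,a_i)R=Ra_1+\cdots+Ra_i$ in a principal ideal domain, its negative equals $r_1a_1+r_2a_2+\cdots+r_ia_i$ for some $r_k\in R$, and setting $X_{k-1,i}=r_k$ for $2\le k\le i$ makes the weighted sum equal to $-r_1a_1\equiv 0\pmod{a_1}$. The point your sketch does not exploit is that $a_1$ itself is one of the generators of the gcd ideal, and its coefficient $r_1$ is simply discarded by the congruence.

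In particular no induction on columns is needed and nothing propagates: the diagonal entries are prescribed in advance by (\ref{di}), each column's congruence is solved independently by the argument above, and the determinant computation is the telescoping product you already have. With that single observation inserted, your argument closes and coincides with the paper's proof.
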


\begin{proof} By (\ref{inS}) and Corollary \ref{Z}, it suffices to verify that the off-diagonal entries of $X$ and the matrix
$Y$ can be chosen so that each column $(\alpha_2,\dots,\alpha_n)$ of $A$ satisfies
\begin{equation}\label{al}
\alpha_2 a_2+\cdots+\alpha_n a_n\equiv 0\mod a_1.
\end{equation}
Let us first verify (\ref{al}) for the first $m-1$ columns of $A$, that is, for the columns of $X$.
Since 
$$
a_2a_1/(a_1,a_2)\equiv 0\mod a_1,
$$
the result is clear for the first column of $X$. Suppose next $2\leq i\leq m-1$. We wish to verify (\ref{al}) for the $i$th column 
of~$X$. Using $(a_1,\dots,a_m)=1$ when $i=m-1$, this verification translates into
\begin{equation}\label{x}
a_2 X_{1,i}+a_3 X_{2,i}+\cdots+a_i X_{i-1,i}+a_{i+1}(a_1,\dots,a_{i})/(a_1,\dots,a_{i+1})\equiv 0\mod a_1.
\end{equation}
As $(a_1,\dots,a_{i})$ divides $a_{i+1}(a_1,\dots,a_{i})/(a_1,\dots,a_{i+1})$, we can certainly complete the $i$th column of~$X$ so that (\ref{x}) is satisfied.

As for the remaining columns of $A$ (which only exist when $m<n$), we need to be able to find one solution to
$$
a_2 Y_{1,j}+\cdots+a_m Y_{m-1,j}\equiv -a_{m+j}\mod a_1,\quad 1\leq j\leq n-m,
$$
which is certainly possible since $\gcd(a_1,a_2,\dots,a_m)=1$.
\end{proof}

Let $a$ be the column vector in $R^n$ with entries $a_1,\dots,a_n$. Since $(a_1,\dots,a_n)=1$, the Smith Normal Form of $a$
is the first canonical vector of $R^n$, say $b$. Thus, there is $E\in\GL_n(R)$ such that $Ea=b$. This readily
implies that rows $2,\dots,n$ of $E$ form a basis of $S$. Theorem \ref{basis} is so closely related to $a_1,\dots,a_n$
that it allows us in \S\ref{S/U_i} to elucidate the structure of certain quotients of $S$.
We found no way of doing this by means of $E$.


\section{Structure of $S/U_i$}\label{S/U_i}

For $1\leq i\neq j\leq n$, we set $u(i,j)=v(i,j)$ if $a_i$ and $a_j$ are both zero,
and $u(i,j)=v(i,j)/(a_i,a_j)$ otherwise.
Since the vectors $v(i,j)$, $i<j$, span $S$, the vectors $u(i,j)$, $i<j$, also span $S$. 
For $1\leq i\leq n$, we let $U_i$ be the submodule of $S$ spanned by all vectors $u(i,j)$ with
$j\neq i$. It is clear that $S_i\subseteq U_i\subseteq S$.

In this section we use
Theorem \ref{basis} in order to  understand the structure of the $R$-module $S/U_i$. This is considerably
more difficult than our prior  study of $S/S_i$. 

For convenience we assume $i=1$. If $a_1=0$ we readily verify that $U_1=S_1$, whence $S/U_1\cong R^{n-2}$
by Theorem \ref{S/S_1}. On the other hand, if $n=2$ we easily see that 
$S=U_1$. Thus, we may suppose that $a_1\neq 0$ and $n>2$, and we do so
for the remainder of this section.

Let $D\in M_{n-1}(R)$ be the matrix whose columns are the coordinates of $u(1,2),\dots,u(1,n)$
relative to $w(1,2),\dots,w(1,n)$. Thus,
\begin{equation}
\label{mad}
D=\mathrm{diag}(a_1/(a_1,a_2),\dots,a_1/(a_1,a_n)).
\end{equation}
Let $A\in M_{n-1}(R)$ be as in (\ref{matriz}) and let $\{z_2,\dots,z_{n}\}$ be the corresponding basis of $S$.
We write $C\in M_{n-1}(R)$ for the matrix whose columns are the coordinates of $u(1,2),\dots,u(1,n)$
relative to the basis $\{z_2,\dots,z_n\}$ of $S$. Then from general principles, we have 
$$
C=A^{-1}D.
$$
Since $|A|=a_1$, we infer
$$
|C|=\frac{a_1^{n-2}}{(a_1,a_{2})\cdots (a_1,a_{n})}.
$$

Let $A$ be as in (\ref{matriz}) when $M=N$, so that $A=X$ is upper triangular.  Then $C$
is an upper triangular matrix, with diagonal entries $d_1,\dots,d_{n-1}$, where $d_1=1$ and
\begin{equation}\label{dis}
d_i=\frac{a_1(a_1,\dots,a_{i+1})}{(a_1,a_{i+1})(a_1,\dots,a_{i})}=
\frac{a_1}{\mathrm{lcm}[(a_1,a_{i+1}),(a_1,\dots,a_{i})]},\quad 2\leq i\leq n-1.
\end{equation}

From Theorem \ref{S/S_1}, we have $S/S_1\cong R^{n-2}/(Ra_1)^{n-2}$. Since $S/U_1\cong (S/S_1)/(U_1/S_1)$, we infer 
that $S/U_1\cong R^{n-2}/Q$, where $Q$ is a submodule of $R^{n-2}$ containing $(Ra_1)^{n-2}$. There is a basis
$\{u_1,\dots,u_{n-2}\}$ of $R^{n-2}$ and nonzero $f_1,\dots,f_{n-2}$ in $R$, all factors of $a_1$,
such that $\{f_1u_1,\dots,f_{n-2}u_{n-2}\}$ is a basis of $Q$. It follows that
\begin{equation}\label{fi}
f_1\cdots f_{n-2}=\frac{a_1^{n-2}}{(a_1,a_{2})\cdots (a_1,a_{n})},
\end{equation}
up to multiplication by units, and
\begin{equation}\label{fi2}
S/U_1\cong R/Rf_1\oplus\cdots\oplus R/Rf_{n-2}.
\end{equation}
In particular, if $n=3$, we have
$$
f_1=\frac{a_1}{(a_1,a_2)(a_1,a_3)}=d_2,\text{ and }S/U_1\cong R/Rf_1.
$$
\begin{cor} We have $S=U_1$ if and only if $a_1^{n-2}=(a_1,a_{2})\cdots (a_1,a_{n})$, up to multiplication by units.
In particular, if $p_1,\dots,p_n\in R$ are nonassociate primes and $a_i=\underset{j\neq i}\Pi p_j^{e_{i,j}}$,
$1\leq i\leq n$, where $e_{1,j}\leq e_{i,j}$ whenever $1,i,j$ are distinct, then $S=U_1$ and
$\{u(1,2),\dots,u(1,n)\}$ is a basis of $S$.
\end{cor}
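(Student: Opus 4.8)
The plan is to read off both assertions directly from the structure established in equations (\ref{fi}) and (\ref{fi2}). The first claim is purely formal: by (\ref{fi}) the product $f_1\cdots f_{n-2}$ equals $a_1^{n-2}/\big((a_1,a_2)\cdots(a_1,a_n)\big)$ up to a unit, while by (\ref{fi2}) we have $S=U_1$ exactly when $S/U_1=0$, i.e. when every $f_i$ is a unit. Since each $f_i$ is a factor of $a_1$ and they are nonzero, their product is a unit if and only if each $f_i$ individually is a unit; and the product is a unit precisely when $a_1^{n-2}=(a_1,a_2)\cdots(a_1,a_n)$ up to units. This gives the equivalence, and I would phrase it in one sentence referencing the two displayed equations.

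The work is therefore concentrated in the ``in particular'' statement, which I would handle by an explicit $p$-adic valuation computation. First I would reduce to checking the divisibility criterion prime by prime: since the $p_j$ are nonassociate primes, for each fixed prime $p_\ell$ it suffices to compare the exponent of $p_\ell$ on the two sides of $a_1^{n-2}=(a_1,a_2)\cdots(a_1,a_n)$. Writing $v_\ell$ for the $p_\ell$-adic valuation, I have $v_\ell(a_i)=e_{i,\ell}$ when $\ell\neq i$ and $v_\ell(a_i)=0$ when $\ell=i$ (since $a_i$ omits the factor $p_i$). The hypothesis $e_{1,j}\le e_{i,j}$ whenever $1,i,j$ are distinct is exactly what makes $a_1$ ``minimal'' at each relevant prime.

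The key computation is to evaluate $v_\ell$ of each $\gcd$. For the left-hand side, $v_\ell(a_1^{n-2})=(n-2)e_{1,\ell}$ when $\ell\neq 1$, and $v_\ell(a_1^{n-2})=0$ when $\ell=1$. For the right-hand side, I must sum $v_\ell\big((a_1,a_i)\big)=\min(v_\ell(a_1),v_\ell(a_i))$ over $i=2,\dots,n$. Fix $\ell$ and split into cases. If $\ell=1$, then $v_\ell(a_1)=0$, so every term $\min(v_\ell(a_1),v_\ell(a_i))=0$ and both sides have valuation $0$. If $\ell\neq 1$, then $v_\ell(a_1)=e_{1,\ell}$; the single term with $i=\ell$ contributes $\min(e_{1,\ell},0)=0$, while each of the remaining $n-2$ terms (those with $i\notin\{1,\ell\}$) contributes $\min(e_{1,\ell},e_{i,\ell})=e_{1,\ell}$ by the hypothesis $e_{1,\ell}\le e_{i,\ell}$. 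Hence the right-hand side has $p_\ell$-valuation $(n-2)e_{1,\ell}$, matching the left. Since this holds for every prime $p_\ell$ and both sides are products of the $p_\ell$ only, the two sides are associate, so $S=U_1$ by the first part.

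The main obstacle, such as it is, is purely bookkeeping: one must keep straight which index equals $\ell$, handle the ``self'' prime $p_i$ that each $a_i$ omits, and correctly invoke the minimality hypothesis $e_{1,j}\le e_{i,j}$ in precisely the $n-2$ terms where it applies. Once the valuation count is organized by the case $\ell=1$ versus $\ell\neq 1$, everything is routine. Finally, for the basis statement, since $S=U_1$ the spanning set $\{u(1,2),\dots,u(1,n)\}$ of $U_1$ spans $S$, and it consists of exactly $n-1$ vectors in the free rank-$(n-1)$ module $S$; a spanning set of cardinality equal to the rank of a free module over a commutative domain is automatically a basis, so I would conclude with that remark.
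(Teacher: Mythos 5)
Your proposal is correct and follows essentially the same route as the paper, whose proof simply reads ``This follows immediately from (\ref{fi}) and (\ref{fi2})'': the equivalence is read off from those two displays exactly as you do, and your $p$-adic valuation count for the ``in particular'' clause (together with the observation that $n-1$ generators of the free rank-$(n-1)$ module $S$ must form a basis) just fills in the details the paper leaves implicit.
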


\begin{proof} This follows immediately from (\ref{fi}) and (\ref{fi2}).
\end{proof}

In Theorem \ref{S/U_1}, we will show that $(f_1,\dots ,f_{n-2})=(d_2,\dots ,d_{n-1})$ is a solution of (\ref{fi2}) in general. We will do this by identifying the $p$-elementary divisors of $S/U_1$, for each prime $p$ in $R$. The following theorem is the main technical element needed for this step. 

\begin{theorem} Suppose $a_1\neq 0$ and $n>2$. 
Let $A$ be as in (\ref{matriz}) when $M=N$, and let $C=A^{-1}D$, with $D$ as in (\ref{mad}). Then
\begin{equation}
\label{Cij}
C_{i,j}\in R C_{i,i}+R C_{j,j},\quad 1\leq i<j\leq n-1.
\end{equation}
\end{theorem}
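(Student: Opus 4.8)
The plan is to prove the divisibility-type containment (\ref{Cij}) by working locally at a fixed prime $p\in R$ and computing the $p$-adic valuations of $C_{i,i}$, $C_{j,j}$ and $C_{i,j}$. Since $R$ is a principal ideal domain, the statement $C_{i,j}\in RC_{i,i}+RC_{j,j}$ is equivalent to the single divisibility $\gcd(C_{i,i},C_{j,j})\mid C_{i,j}$, which in turn reduces to checking, for every prime $p$, that the $p$-valuation of $C_{i,j}$ is at least $\min\{v_p(C_{i,i}),v_p(C_{j,j})\}$. From (\ref{dis}) we already have closed forms for the diagonal entries $d_i=C_{i,i}$, so the first task is to write down $v_p(C_{i,i})$ explicitly. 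Writing $\alpha_k=v_p(a_k)$ for $1\le k\le n$ and setting $\mu_k=\min\{\alpha_1,\dots,\alpha_k\}$ for the valuation of the running gcd $(a_1,\dots,a_k)$, the formula $d_i=a_1/\mathrm{lcm}[(a_1,a_{i+1}),(a_1,\dots,a_i)]$ gives
\[
v_p(C_{i,i})=\alpha_1-\max\{\min(\alpha_1,\alpha_{i+1}),\,\mu_i\}.
\]
This valuation is the basic quantity the whole argument will be phrased in terms of.

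Next I would get my hands on $C_{i,j}$ itself. Since $C=A^{-1}D$ with $A=X$ upper triangular, $C$ is upper triangular and its off-diagonal entries are determined by the off-diagonal entries of $X$ (equivalently by the choices made in (\ref{x}) during the proof of Theorem \ref{basis}) together with the diagonal scaling by $D$. The cleanest route is probably to avoid computing $A^{-1}$ head-on and instead use the relation $AC=D$: reading off the $(i,j)$ entry of this matrix identity expresses a triangular combination $\sum_{k\le j}X_{i,k}C_{k,j}$ in terms of the diagonal entry $D_{i,i}=a_1/(a_1,a_{i+1})$, which lets me derive a recursion for $C_{i,j}$ in terms of the $C_{k,j}$ with $k>i$ and the known entries of $X$. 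Running this recursion, together with the divisibility facts built into $X$ (each $X$-entry is a multiple of the appropriate gcd, as guaranteed by (\ref{x}) in Theorem \ref{basis}), should yield a lower bound on $v_p(C_{i,j})$. I expect the outcome to be that $v_p(C_{i,j})\ge \alpha_1-\mu_i$, or more precisely a bound that already dominates the smaller of the two diagonal valuations.

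The key comparison is then purely combinatorial in the valuations $\alpha_k$ and the running minima $\mu_k$. Since $\mu_1\ge\mu_2\ge\cdots\ge\mu_{n-1}$ is a nonincreasing sequence, for $i<j$ one has $\mu_i\ge\mu_j$, and hence
\[
v_p(C_{i,i})=\alpha_1-\max\{\min(\alpha_1,\alpha_{i+1}),\mu_i\}\le \alpha_1-\mu_i.
\]
The task is to show $v_p(C_{i,j})\ge \min\{v_p(C_{i,i}),v_p(C_{j,j})\}$, and the plan is to check that the lower bound obtained from the recursion always meets or exceeds this minimum, splitting into the cases governed by whether the maximum defining each diagonal valuation is achieved by the $\min(\alpha_1,\alpha_{\bullet+1})$ term or by the running-gcd term $\mu_\bullet$.

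The main obstacle will be the middle step: extracting a usable and sharp enough lower bound for $v_p(C_{i,j})$ from the recursion $AC=D$. The off-diagonal entries of $X$ were only pinned down up to the freedom in solving (\ref{x}), so $C_{i,j}$ is not given by a clean closed form, and I must argue that \emph{whatever} admissible choices were made, the valuation bound survives. The way around this is to observe that the divisibility in (\ref{Cij}) is an intrinsic property of $S$, $U_1$ and the chosen basis, independent of the off-diagonal slack; concretely, I would prove the valuation inequality using only the \emph{divisibility constraints} that $X$ is forced to satisfy (namely that $X_{k-1,i}$ contributes a multiple of $(a_1,\dots,a_k)$ in the combination (\ref{x})), rather than any explicit value, so that the bound $v_p(C_{i,j})\ge\alpha_1-\mu_i$ holds uniformly. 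Once that uniform bound is in hand, the final comparison against $\min\{v_p(C_{i,i}),v_p(C_{j,j})\}$ is an elementary manipulation of the nonincreasing sequence $\mu_k$, and summing over all primes $p$ gives (\ref{Cij}).
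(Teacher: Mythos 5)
There is a genuine gap, and it is located exactly where you flag the ``main obstacle.'' Your framework (reduce $C_{i,j}\in RC_{i,i}+RC_{j,j}$ to $\gcd(C_{i,i},C_{j,j})\mid C_{i,j}$ and check $p$-adic valuations prime by prime) is sound, and your formula $v_p(C_{i,i})=\alpha_1-\max\{\min(\alpha_1,\alpha_{i+1}),\mu_i\}$ is correct. But the one concrete intermediate bound you propose, $v_p(C_{i,j})\ge\alpha_1-\mu_i$ --- equivalently $\frac{a_1}{(a_1,\dots,a_i)}\mid C_{i,j}$, which would even give $C_{i,i}\mid C_{i,j}$ --- is false, and the paper's own worked example refutes it: for $(a_1,a_2,a_3,a_4)=(12,4,2,3)$ one gets $C_{2,3}=-1$ while $a_1/(a_1,a_2)=3$, so $v_3(C_{2,3})=0<1=\alpha_1-\mu_2$. (Here the conclusion of the theorem survives only because $v_3(C_{3,3})=0$; the $C_{j,j}$ term cannot be discarded, so the final ``elementary comparison'' is also not the one you describe.) Your hedge that you really want ``a bound that already dominates the smaller of the two diagonal valuations'' is just a restatement of the goal, not a bound.

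The deeper issue is that extracting \emph{any} usable lower bound on $v_p(C_{i,j})$ from $AC=D$ is the entire content of the theorem, and your sketch does not supply it. The paper's route is to compute $C=\mathrm{adj}(A)D/a_1$ explicitly, obtaining $C_{i,j}=\pm\frac{a_1(a_1,\dots,a_{j+1})}{(a_1,\dots,a_i)(a_1,a_{j+1})}M_{i,j}$ where $M_{i,j}$ is a minor of $A$, reducing the claim to the divisibility $\frac{(a_1,\dots,a_i,a_{j+1})}{(a_1,\dots,a_{j+1})}\mid M_{i,j}$, and then proving this by induction on $j-i$ in two stages: a determinant expansion of $M_{i,j}$ along its first column to get $\frac{(a_1,\dots,a_{i+1},a_{j+1})}{(a_1,\dots,a_{j+1})}\mid M_{i,j}$, followed by a separate argument using the congruences built into $A$ (multiplying the $(i,j)$ entry of $AC=D$ by $a_{i+1}$ and dividing through by a common factor) to improve $a_{i+1}$ to $a_i$ in the numerator. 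Your plan to use ``only the divisibility constraints that $X$ is forced to satisfy'' is the right instinct --- the congruences (\ref{x}) are indeed what must be exploited --- but those congruences constrain whole linear combinations $a_2X_{1,i}+\cdots$ modulo $a_1$, not individual entries, so the valuation of $C_{i,j}$ is not controlled term by term in the recursion, and the inductive bookkeeping (which minors divide what) is where all the work lies. As written, the proposal would not close.
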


\begin{proof} We have $A = \left( x_{i,j}\right)$, where 
	\begin{equation}\label{xii}
		x_{i,i} = \frac{(a_1, \ldots, a_i)}{(a_1, \ldots , a_{i+1})}, \; 1 \leq i < n,
	\end{equation}
	and  
	\begin{equation}\label{xij}
		a_2 x_{1,j} + \cdots + a_j x_{j-1,j} + a_{j+1} \frac{(a_1, \ldots, a_j)}{(a_1, \ldots , a_{j+1})}\equiv 0 \mod a_1, \; 1 \leq j <n.
	\end{equation}
Moreover, the diagonal entries of $C$ are clearly given by 
$$C_{i,i} = \frac{a_1 (a_1, \ldots, a_{i+1})}{(a_1, \ldots, a_i)(a_1, a_{i+1})}, \; 1 \leq i <n.$$
Regarding the off-diagonal entries of $C$, let $E$ be the adjoint of $A$.
Thus $E$ is upper triangular~and
$$
E_{i,j}=(-1)^{i+j}|A(j,i)|,\quad 1\leq i<j<n,
$$
where $A(j,i)$ is the submatrix obtained from $A$ by deleting its $j$th row and $i$th column.
Hence
$$AE=|A|I_{n-1}=a_1I_{n-1},\text{ so }C=\frac{ED}{a_1}.$$
The matrix $A(j,i)$ is block upper triangular. The first $i-1$ and the last $n-1-j$ diagonal blocks have size $1$
and are simply the corresponding diagonal entries of $A$,  while the middle diagonal block, say $N_{i,j}$, 
has size $j-i$ and is the submatrix of $A$ determined by rows $i,\dots,j-1$ and columns $i+1,\dots,j$, whose
determinant will be denoted by $M_{i,j}$. Computing $|A(j,i)|$ by making use of these diagonal blocks, we 
see that for $1\leq i <j<n$, we have
\begin{equation}\label{relcij}
	\begin{split}
		C_{i,j} & =  \frac{a_1}{(a_1,a_{j+1})} \frac{E(i,j)}{a_1}\\
		 & = (-1)^{i+j}\frac{a_1}{(a_1,a_{j+1})} \frac{1}{a_1}x_{1,1} \cdots x_{i-1,i-1} x_{j+1,j+1} \cdots x_{n-1,n-1} M_{i,j}\\
		 & = (-1)^{i+j}\frac{a_1}{(a_1,a_{j+1})} \frac{1}{a_1} \frac{a_1}{(a_1,a_2)} \cdots \frac{(a_1,\ldots, a_{i-1})}{(a_1,\ldots, a_i)} \frac{(a_1, \ldots, a_{j+1})}{(a_1, \ldots,a_{j+2})} \cdots \frac{(a_1, \ldots,a_{n-1})}{(a_1,\ldots,a_n)}M_{i,j}\\
		 & = (-1)^{i+j}\frac{a_1 (a_1,\ldots , a_{j+1})}{(a_1, \ldots , a_i)(a_1,a_{j+1})}  M_{i,j}.
		\end{split} 
	\end{equation}
Thus, to prove (\ref{Cij}) we need find to $u,v\in R$ such that 
	$$ 
	\frac{a_1(a_1, \ldots , a_{i+1})}{(a_1, \ldots , a_i)(a_1, a_{i+1})} u + 
	\frac{a_1(a_1, \ldots , a_{j+1})}{(a_1, \ldots , a_j)(a_1, a_{j+1})} v = 
	\frac{a_1(a_1, \ldots , a_{j+1})}{(a_1, \ldots , a_i)(a_1, a_{j+1})} M_{i,j},  
	$$ 
	which is equivalent to
	\begin{multline*}
		(a_1, \ldots, a_{i+1}) ( a_1, \ldots, a_j) (a_1, a_{j+1}) u + (a_1, \ldots, a_{j+1}) (a_1, \ldots, a_i) (a_1, a_{i+1}) v \\
		= (a_1, \ldots, a_{j+1}) (a_1, \dots, a_j) (a_1, a_{i+1}) M_{i,j},
	\end{multline*}
which is equivalent to
\begin{multline*}
 \frac{( a_1, \ldots, a_j)}{(a_1, \ldots, a_{j+1})} \frac{(a_1, a_{j+1})}{(a_1, \ldots, a_{j+1})} u +  \frac{(a_1, \ldots, a_i)}{(a_1, \ldots, a_{j+1})} \frac{(a_1, a_{i+1})}{(a_1, \ldots, a_{i+1})} v \\
	= \frac{(a_1, \dots, a_j)}{(a_1, \ldots, a_{j+1})} \frac{(a_1, a_{i+1})}{(a_1, \ldots, a_{i+1})} M_{i,j}.
\end{multline*}
Such elements $u$ and $v$ exist if and only if
$$d=\left( \frac{( a_1, \ldots, a_j)}{(a_1, \ldots, a_{j+1})} \frac{(a_1, a_{j+1})}{(a_1, \ldots, a_{j+1})} , 
\frac{(a_1, \ldots, a_i)}{(a_1, \ldots, a_{j+1})} \frac{(a_1, a_{i+1})}{(a_1, \ldots, a_{i+1})} \right)
$$
is a factor of
$$
\frac{(a_1, \dots, a_j)}{(a_1, \ldots, a_{j+1})} \frac{(a_1, a_{i+1})}{(a_1, \ldots, a_{i+1})} M_{i,j}.
$$
Since $\frac{( a_1, \ldots, a_j)}{(a_1, \ldots, a_{j+1})}$ and $\frac{(a_1, a_{j+1})}{(a_1, \ldots, a_{j+1})}$ are relatively prime, we have
$$
d= \left( \frac{( a_1, \ldots, a_j)}{(a_1, \ldots, a_{j+1})}  , \frac{(a_1, \ldots, a_i)}{(a_1, \ldots, a_{j+1})} 
\frac{(a_1, a_{i+1})}{(a_1, \ldots, a_{i+1})} \right)
\left( \frac{(a_1, a_{j+1})}{(a_1, \ldots, a_{j+1})} , \frac{(a_1, \ldots, a_i)}{(a_1, \ldots, a_{j+1})} 
\frac{(a_1, a_{i+1})}{(a_1, \ldots, a_{i+1})} \right).
$$
The first factor is equal to $\frac{( a_1, \ldots, a_j)}{(a_1, \ldots, a_{j+1})},$ 
because $\frac{( a_1, \ldots, a_j)}{(a_1, \ldots, a_{j+1})}$ divides $\frac{(a_1, \ldots, a_i)}{(a_1, \ldots, a_{j+1})}$,
while the second factor is equal to 
$$
\frac{(a_1, \ldots, a_i, a_{j+1})}{(a_1, \ldots, a_{j+1})} \left( \frac{(a_1, a_{j+1})}{(a_1, \ldots, a_i, a_{j+1})} , \frac{(a_1, \ldots, a_i)}{(a_1, \ldots,a_i, a_{j+1})} \frac{(a_1, a_{i+1})}{(a_1, \ldots, a_{i+1})} \right)  $$
$$= \frac{(a_1, \ldots, a_i, a_{j+1})}{(a_1, \ldots, a_{j+1})} \left( \frac{(a_1, a_{j+1})}{(a_1, \ldots, a_i, a_{j+1})} ,  \frac{(a_1, a_{i+1})}{(a_1, \ldots, a_{i+1})} \right),$$ 
which is a factor of 	$$\frac{(a_1, \ldots, a_i, a_{j+1})}{(a_1, \ldots, a_{j+1})} \frac{(a_1, a_{i+1})}{(a_1, \ldots, a_{i+1})}.$$ 
Thus  $d$ divides 
$$\frac{( a_1, \ldots, a_j)}{(a_1, \ldots, a_{j+1})}  \frac{(a_1, a_{i+1})}{(a_1, \ldots, a_{i+1})} 
\frac{(a_1, \ldots, a_i, a_{j+1})}{(a_1, \ldots, a_{j+1})}.
$$

The theorem will be proved if $ \frac{(a_1, \ldots, a_i, a_{j+1})}{(a_1, \ldots, a_{j+1})}$ divides $ M_{i,j},$ for all 
$1 \leq i < j \leq n-1$. We will show this by induction on $j-i$. Suppose first that $j-i=1$. We wish to show that
$ \frac{(a_1, \ldots, a_i, a_{i+2})}{(a_1, \ldots, a_{i+2})}$ divides $M_{i,i+1} = x_{i,i+1}$. Taking $j=i+1$ in
(\ref{xij}), we obtain
$$a_2 x_{1,i+1} + \cdots + a_{i+1} x_{i,i+1} + a_{i+2} \frac{(a_1, \ldots, a_{i+1})}{(a_1, \ldots , a_{i+2})}\equiv 0 \mod a_1,$$ 
 which implies
\begin{multline*}
	\frac{a_2}{(a_1, \ldots, a_{i+2})} x_{1,i+1} + \cdots + \frac{a_{i}}{(a_1, \ldots, a_{i+2})} x_{i-1,i+1} + \frac{a_{i+1}}{(a_1, \ldots, a_{i+2})} x_{i,i+1} \\
	+ \frac{a_{i+2}}{(a_1, \ldots, a_{i+2})} \frac{(a_1, \ldots, a_{i+1})}{(a_1, \ldots , a_{i+2})}\equiv 0\mod 
	\frac{a_1}{(a_1, \ldots, a_{i+2})}.
\end{multline*}
Each term on the left hand side except for $\frac{a_{i+1}}{(a_1, \ldots, a_{i+2})} x_{i,i+1}$ is a multiple of 
$\frac{(a_1,\ldots, a_i, a_{i+2})}{(a_1, \ldots, a_{i+2})}$. Since
$\frac{a_1}{(a_1, \ldots, a_{i+2})}$ is also a multiple of  $\frac{(a_1,\ldots, a_i, a_{i+2})}{(a_1, \ldots, a_{i+2})}$,
we infer that $\frac{(a_1,\ldots, a_i, a_{i+2})}{(a_1, \ldots, a_{i+2})}$ divides $\frac{a_{i+1}}{(a_1, \ldots, a_{i+2})} x_{i,i+1}$.
But $\frac{a_{i+1}}{(a_1, \ldots, a_{i+2})}$  and  $\frac{(a_1,\ldots, a_i, a_{i+2})}{(a_1, \ldots, a_{i+2})}$  are relatively prime, so    $\frac{(a_1,\ldots, a_i, a_{i+2})}{(a_1, \ldots, a_{i+2})}$ divides $x_{i,i+1}$. This proves the case $j-i=1$. 

Let $1<k\leq n-2$ and suppose that $ \frac{(a_1, \ldots, a_i, a_{j+1})}{(a_1, \ldots, a_{j+1})}$ divides $ M_{i,j}$  when $j-i <k$.
We will show that $ \frac{(a_1, \ldots, a_i, a_{j+1})}{(a_1, \ldots, a_{j+1})}$ divides $ M_{i,j}$  when $j-i =k$.

We begin by writing $M_{i,j}$ in terms of $M_{i+r,j}$, $1\leq r\leq j-i-1$, in order to be able to invoke the inductive hypothesis.
We require additional notation for this purpose. Given subsets $P,Q$ of $\{1,\dots,n-1\}$, let $A_{P,Q}$ be the submatrix
of $A$ with rows and columns determined by $P$ and $Q$, respectively. With this notation, we have 
$N_{i,j}=A_{\{i,\dots,j-1\},\{i+1,\dots,j\}}$. Notice that this matrix is {\em almost upper triangular}, in the
sense that its subdiagonals below the first one are all equal to zero. Notice that the first subdiagonal of 
$A_{\{i,\dots,j-1\},\{i+1,\dots,j\}}$ consists of the diagonal entries $x_{i+1,i+1},\dots, x_{j-1,j-1}$ of $A$. 
In particular, all entries in the first column of 
$A_{\{i,\dots,j-1\},\{i+1,\dots,j\}}$ below the second row are equal to zero. Now $M_{i,j}$ is the determinant of this matrix,
so expanding $M_{i,j}$ by the first column, we obtain
$$
M_{i,j}=x_{i,i+1} M_{i+1,j} - x_{i+1,i+1}|A_{\{i,i+2,\dots,j-1\},\{i+2,\dots,j\}}|.
$$
Here $A_{\{i,i+2,\dots,j-1\},\{i+2,\dots,j\}}$ is again almost upper triangular, with entries $x_{i+2,i+2},\dots, x_{j-1,j-1}$
in its first subdiagonal. Expanding $|A_{\{i,i+2,\dots,j-1\},\{i+2,\dots,j\}}|$ along its first column yields
$$
|A_{\{i,i+2,\dots,j-1\},\{i+2,\dots,j\}}|=x_{i,i+2} M_{i+2,j} - x_{i+2,i+2}|A_{\{i,i+3,\dots,j-1\},\{i+3,\dots,j\}}|.
$$
Continuing this process, we eventually arrive at
\begin{multline}\label{expans}
	M_{i,j} = x_{i,i+1} M_{i+1,j} - x_{i+1,i+1} x_{i,i+2} M_{i+2,j} \\
	+ \cdots\\ 
	+ (-1)^{j-i} x_{i+1,i+1} \cdots x_{j-2,j-2} x_{i,j-1} M_{j-1,j} \\
	+ (-1)^{j-i+1} x_{i+1,i+1} \cdots x_{j-1,j-1} x_{i,j}.
\end{multline}
We next use (\ref{expans}) to prove the following intermediate step:
\begin{equation}
\label{div1}
\frac{(a_1, \ldots, a_{i+1}, a_{j+1})}{(a_1, \ldots, a_{j+1})}|M_{i,j}.
\end{equation}
It suffices to verify that each term on the right hand side of (\ref{expans}) is a multiple of 
$\frac{(a_1, \ldots, a_{i+1}, a_{j+1})}{(a_1, \ldots, a_{j+1})}$.

We start this verification with the first term. Since $j-(i+1)< k$, by induction hypothesis, we  have
$\frac{(a_1, \dots, a_{i+1},a_{j+1})}{(a_1, \ldots, a_{j+1})} | M_{i+1,j}$, as required. We continue our verification with the middle terms:
$$(-1)^{r+1}x_{i+1,i+1} \cdots x_{i+r-1,i+r-1} x_{i,i+r} M_{i+r,j},\quad 2 \leq r \leq j-i-1.$$
We have $\left( (a_1,\ldots, a_{i+1}, a_{j+1}),(a_1, \ldots, a_{i+r}) \right) = (a_1, \ldots, a_{i+r}, a_{j+1})$, so
$$\frac{(a_1,\ldots, a_{i+1}, a_{j+1})(a_1,\ldots, a_{i+r})}{(a_1, \ldots, a_{i+r}, a_{j+1})} = 
\text{lcm}\left( (a_1,\ldots, a_{i+1}, a_{j+1}),(a_1,\ldots, a_{i+r})\right).$$
Since $(a_1,\ldots, a_{i+1}, a_{j+1}) | (a_1,\ldots, a_{i+1})$ and $(a_1,\ldots, a_{i+r}) | (a_1,\ldots, a_{i+1})$, we obtain
$$\text{lcm}\left( (a_1,\ldots, a_{i+1}, a_{j+1}),(a_1,\ldots, a_{i+r})\right) |  (a_1,\ldots, a_{i+1}),$$ which yields
$$\frac{(a_1,\ldots, a_{i+1}, a_{j+1})(a_1,\ldots, a_{i+r})}{(a_1, \ldots, a_{i+r}, a_{j+1})} | (a_1,\ldots, a_{i+1}),$$
and therefore
$$\frac{(a_1,\ldots, a_{i+1}, a_{j+1})}{(a_1, \ldots, a_{i+r}, a_{j+1})} | \frac{(a_1, \ldots a_{i+1})}{(a_1,\ldots, a_{i+r})} .$$
But, $$x_{i+1,i+1} \cdots x_{i+r-1,i+r-1} = \frac{(a_1, \ldots, a_{i+1})}{(a_1, \ldots, a_{i+2})} \cdots 
\frac{(a_1, \ldots, a_{x_{i+r-1}})}{(a_1, \ldots, a_{i+r})} = \frac{(a_1, \ldots, a_{i+1})}{(a_1, \ldots, a_{i+r})},$$
so
\begin{equation}\label{ind1}
	\frac{(a_1,\ldots, a_{i+1}, a_{j+1})}{(a_1, \ldots, a_{i+r}, a_{j+1})} | x_{i+1,i+1} \cdots x_{i+r-1,i+r-1}.
\end{equation}
On the other hand, since $j-(i+r)<k$, our induction hypothesis gives
\begin{equation}\label{ind2}
	\frac{(a_1, \ldots, a_{i+r}, a_{j+1})}{(a_1, \ldots , a_{j+1})} | M_{i+r,j}.
\end{equation}
It follows from (\ref{ind1}) and (\ref{ind2}) that the middle terms of (\ref{expans}) are all multiples of 
$\frac{(a_1, \ldots, a_{i+1}, a_{j+1})}{(a_1, \ldots, a_{j+1})}$. We next verify that the last term of (\ref{expans}) is also a multiple of 
$\frac{(a_1, \ldots, a_{i+1}, a_{j+1})}{(a_1, \ldots, a_{j+1})}$. We have
$$x_{i+1,i+1} \cdots x_{j-1,j-1} x_{i,j} = 
\frac{(a_1, \ldots, a_{i+1})}{(a_1, \ldots, a_{i+2})} \cdots \frac{(a_1, \ldots, a_{j-1})}{(a_1, \ldots, a_j)} x_{i,j} = 
\frac{(a_1, \ldots, a_{i+1})}{(a_1, \ldots, a_{j})} x_{i,j},$$
which is divisible by $\frac{(a_1, \ldots, a_{i+1},a_{j+1})}{(a_1, \ldots, a_{j+1})}$ because 
$$\frac{(a_1, \ldots, a_{i+1},a_{j+1})(a_1, \ldots, a_j)}{(a_1, \ldots, a_{j+1})} = 
\text{lcm}\left( (a_1, \ldots, a_{i+1},a_{j+1}), (a_1, \ldots, a_j) \right)$$
is a factor of $(a_1, \ldots, a_{i+1})$, which, as above, implies that
$$\frac{(a_1, \ldots, a_{i+1},a_{j+1})}{(a_1, \ldots, a_{j+1})} | \frac{(a_1, \ldots, a_{i+1})}{(a_1, \ldots, a_j)}.$$
This completes the verification of (\ref{div1}). We next prove that
\begin{equation}\label{div2}
	\frac{(a_1, \ldots a_i, a_{j+1})}{(a_1, \ldots a_{i+1}, a_{j+1})} | M_{i,j}.
\end{equation}
Combining (\ref{div1}) and (\ref{div2}) will yield the desired result
$$\frac{(a_1, \ldots a_i, a_{j+1})}{(a_1, \ldots a_{j+1})} | M_{i,j}.$$
Since $AC=D$, comparing their $(i,j)$ entries for $1 \leq i<j < n$, gives
$$ x_{i,i} C_{i,j} + x_{i,i+1} C_{i+1,j} + \cdots + x_{i,j} C_{jj} = 0.$$
Multiplying both sides by $a_{i+1}$, and using (\ref{xii}) and (\ref{xij}), we obtain
\begin{multline}\label{prod}
	a_{i+1} x_{i,i} C_{i,j}\equiv \left( a_2 x_{1,i+1} + \cdots + a_i x_{i-1, i+1} + a_{i+2} x_{i+1,i+1}\right) C_{i+1,j}\\
	+\left( a_2 x_{1,i+2} + \cdots + a_i x_{i-1, i+2} + a_{i+2} x_{i+1,i+2}+ a_{i+3} x_{i+2,i+2}\right) C_{i+2,j}\\
	 + \cdots\\
	 +\left( a_2 x_{1,j} + \cdots + a_i x_{i-1, j} + a_{i+2} x_{i+1,j}+ \cdots + a_{j+1} x_{j,j}\right) C_{j,j}\mod a_1.
\end{multline}
For $1\leq r \leq j-i$, the term involving $a_{i+r}$ is equal to
$$a_{i+r} \left( \sum_{t=i+r-1}^{j}x_{i+r-1,t} C_{t,j} \right) = a_{i+r} D_{i+r-1,j} =0.$$
Thus (\ref{prod}) reduces to
\begin{multline}\label{prod1}
	a_{i+1} x_{i,i} C_{i,j}\equiv a_2\left( x_{1,i+1} C_{i+1,j}+ \cdots + x_{1,j} C_{j,j} \right)\\
	+a_3 \left( x_{2,i+1} C_{i+1,j}+ \cdots + x_{2,j} C_{j,j} \right)\\
	+ \cdots\\
	+a_i\left( x_{i-1,i+1} C_{i+1,j}+ \cdots + x_{i-1,j} C_{j,j} \right) +a_{j+1} x_{j,j} C_{j,j} \mod a_1.
	\end{multline}
We claim that the left hand side of (\ref{prod1}), as well as each term on the right hand
of (\ref{prod1}), including~$a_1$, is divisible by 
$\frac{a_1}{\text{lcm}\left( (a_1, \ldots, a_{i+1}),(a_1,a_{j+1}) \right)}$. Indeed, by (\ref{xii}) and (\ref{relcij}), 
the left hand side of (\ref{prod1}) is equal to
\begin{equation}\label{uno}
	\begin{split}
		a_{i+1} x_{i,i} C_{i,j} & = (-1)^{i+j}a_{i+1} \frac{a_1 (a_1, \ldots, a_{j+1})}{(a_1, \ldots, a_{i+1})(a_1,a_{j+1})} M_{i,j}\\
		& =  (-1)^{i+j}a_{i+1} \frac{a_1}{\text{lcm}\left( (a_1, \ldots, a_{i+1}),(a_1,a_{j+1}) \right)}  
		\frac{M_{i,j}}{\frac{(a_1, \ldots, a_{i+1},a_{j+1})}{(a_1, \ldots, a_{j+1})}}.
	\end{split}
\end{equation}
It follows from (\ref{div1}) that both sides of (\ref{uno})   are multiples of 
$\frac{a_1}{\text{lcm}\left( (a_1, \ldots, a_{i+1}),(a_1,a_{j+1}) \right)}$. From (\ref{relcij}), for  $1 \leq r \leq j-i-1$,  
we have
\begin{equation}\label{dos}
	\begin{split}
C_{i+r,j} & =(-1)^{i+r+j}\frac{a_1 (a_1, \ldots, a_{j+1})}{ (a_1, \ldots, a_{i+r}) (a_1, a_{j+1})} M_{i+r,j}\\
& = (-1)^{i+r+j} \frac{a_1}{\text{lcm}\left( (a_1, \ldots, a_{i+r}),(a_1,a_{j+1}) \right)}  
\frac{M_{i+r,j}}{\frac{(a_1, \ldots, a_{i+r},a_{j+1})}{(a_1, \ldots, a_{j+1})}}.
\end{split}
\end{equation}
Here $\frac{(a_1, \ldots, a_{i+r},a_{j+1})}{(a_1, \ldots, a_{j+1})}$ divides $M_{i+r,j}$ by induction hypothesis.
Since $$\frac{a_1}{\text{lcm}\left( (a_1, \ldots, a_{i+1}),(a_1,a_{j+1}) \right)} |  
\frac{a_1}{\text{lcm}\left( (a_1, \ldots, a_{i+r}),(a_1,a_{j+1}) \right)},$$ we deduce from (\ref{dos}) that
$$\frac{a_1}{\text{lcm}\left( (a_1, \ldots, a_{i+1}),(a_1,a_{j+1}) \right)} | C_{i+r,j}.$$
In addition,  $\frac{a_1}{\text{lcm}\left( (a_1, \ldots, a_{i+1}),(a_1,a_{j+1}) \right)}$ divides 
$\frac{a_1}{\text{lcm}\left( (a_1, \ldots, a_j),(a_1,a_{j+1}) \right)}= C_{j,j}$ as well as $a_1$. This proves the claim.

We next divide (\ref{prod1}) by  $\frac{a_1}{\text{lcm}\left( (a_1, \ldots, a_{i+1}),(a_1,a_{j+1}) \right)}$ and then multiply it by 
$\frac{(a_1, \ldots, a_{i+1},a_{j+1})}{(a_1, \ldots, a_{j+1})}$. 
By (\ref{uno}), the resulting left hand side will be equal to $(-1)^{i+j}a_{i+1} M_{i,j}$. Regarding the right hand side, (\ref{dos})
implies that 
$$
(-1)^{i+r+j}\frac{(a_1, \ldots, a_{i+1},a_{j+1})}{(a_1, \ldots, a_{j+1})}\frac{C_{i+r,j}}
	{\frac{a_1}{\text{lcm}\left( (a_1, \ldots, a_{i+1}),(a_1,a_{j+1}) \right)}},\quad 1 \leq r \leq j-i-1,
$$
is equal to
\begin{equation*}
	\frac{(a_1, \ldots, a_{i+1},a_{j+1})}{(a_1, \ldots, a_{j+1})}\frac{\text{lcm}
	\left( (a_1, \ldots, a_{i+1}),(a_1,a_{j+1}) \right)}{\text{lcm}\left( (a_1, \ldots, a_{i+r}),(a_1,a_{j+1}) \right)} \frac{M_{i+r,j}}
	{\frac{(a_1, \ldots, a_{i+r},a_{j+1})}{(a_1, \ldots, a_{j+1})}}=\frac{(a_1, \ldots, a_{i+1})}{(a_1, \ldots, a_{i+r})} M_{i+r, j}.
\end{equation*}
Moreover, note that
\begin{equation*}
	\begin{split}
		\frac{(a_1, \ldots, a_{i+1},a_{j+1})}{(a_1, \ldots, a_{j+1})} \frac{C_{j,j}}{\frac{a_1}{\text{lcm}\left( (a_1, \ldots, a_{i+1}),(a_1,a_{j+1}) \right)}} & = \frac{(a_1, \ldots, a_{i+1},a_{j+1})}{(a_1, \ldots, a_{j+1})} \frac{\text{lcm}\left( (a_1, \ldots, a_{i+1}),(a_1,a_{j+1}) \right)} {\text{lcm}\left( (a_1, \ldots, a_j),(a_1,a_{j+1}) \right)}\\
		& = \frac{(a_1, \ldots, a_{i+1})}{(a_1, \ldots, a_{j})},
\end{split}
\end{equation*}
and
\begin{equation*}
	\begin{split}
\frac{(a_1, \ldots, a_{i+1},a_{j+1})}{(a_1, \ldots, a_{j+1})} \frac{a_1}{\frac{a_1}{\text{lcm}\left( (a_1, \ldots, a_{i+1}),(a_1,a_{j+1}) \right)}} & = \frac{(a_1, \ldots, a_{i+1},a_{j+1})}{(a_1, \ldots, a_{j+1})} \text{lcm}\left( (a_1, \ldots, a_{i+1}),(a_1,a_{j+1}) \right)\\
& = (a_1,a_{j+1}) \frac{(a_1,\ldots,a_{i+1})}{(a_1,\ldots,a_{j+1})}.
\end{split}
\end{equation*}



Combining the above information, we find that (\ref{prod1}) reduces to
\begin{equation}
	\begin{split}
	(-1)^{i+j}a_{i+1} M_{i,j} & \equiv  a_2\left( (-1)^{i+1+j}x_{1,i+1} M_{i+1,j}+ \cdots -\frac{(a_1, \ldots, a_{i+1})}{(a_1, \ldots, a_{j-1})} x_{1,j-1} M_{j-1,j} + \frac{(a_1, \ldots, a_{i+1})}{(a_1, \ldots, a_{j})} x_{1,j}\right)\\
	& + \cdots\\
	& + a_i\left( (-1)^{i+1+j} x_{i-1,i+1} M_{i+1,j}+ \cdots - \frac{(a_1, \ldots, a_{i+1})}{(a_1, \ldots, a_{j-1})} x_{i-1,j-1} M_{j-1,j} + \frac{(a_1, \ldots, a_{i+1})}{(a_1, \ldots, a_{j})} x_{1,j}\right) \\
	& + a_{j+1} \frac{(a_1, \ldots, a_{i+1})}{(a_1, \ldots, a_{j+1})} 
	\mod (a_1,a_{j+1})\frac{(a_1, \ldots, a_{i+1})}{(a_1, \ldots, a_{j+1})}
\end{split}	
\end{equation}
The presence of $a_2,\dots,a_{j+1}$ ensures that each term in the right hand side 
is divisible by $(a_1, \ldots a_i, a_{j+1})$. Moreover, it is clear that $(a_1, \ldots a_i, a_{j+1})$ is a factor of
$(a_1,a_{j+1})$ and hence of $(a_1,a_{j+1})\frac{(a_1, \ldots, a_{i+1})}{(a_1, \ldots, a_{j+1})}$. Therefore
 $$(a_1, \ldots a_i, a_{j+1}) | a_{i+1} M_{i,j}$$
 and hence
 $$\frac{(a_1, \ldots a_i, a_{j+1})}{(a_1, \ldots a_{i+1}, a_{j+1})} | \frac{a_{i+1}}{(a_1, \ldots a_{i+1}, a_{j+1})} M_{i,j}.$$
But  $\frac{(a_1, \ldots a_i, a_{j+1})}{(a_1, \ldots a_{i+1}, a_{j+1})}$ and
$\frac{a_{i+1}}{(a_1, \ldots a_{i+1}, a_{j+1})}$ are relatively prime, so (\ref{div2}) holds.
\end{proof}

We return to the problem of describing the structure of the finitely generated torsion $R$-module $Q=S/U_1$.
This is completely determined by the $p$-elementary divisors of $Q$ for each prime $p\in R$, so we
are reduced to finding these. We fix for this purpose a prime $p\in R$, and write $R_p$ and $Q_p$ for the localizations
of $R$ and $Q$ at $p$, so that $Q_p$ is a module over the local principal ideal domain $R_p$,
and $R$ can be viewed as a subring of $R_p$. For any $R$-module $Z$, we write $Z[p]$ for the $p$-component of $Z$, i.e. 
$Z[p]=\{z\in Z\,|\, p^s z=0\text{ for some }s\geq 1\}$. We readily see that the map $Z[p]\to Z_p[p]$, given by $x\to \frac{x}{1}$,
is an isomorphism of $R$-modules. This applies, in particular, to $Z=Q$. 
Now $Q_p[p]$ is the direct sum of $R_p$-modules of the form $R_p/R_p p^s$, and it
is easy to see that $R_p/R_p p^s \cong R/R p^s$ as $R$-modules. It follows that the $p$-elementary divisors of $Q_p$ as a 
$R_p$-module
coincide with the $p$-elementary divisors of $Q$ as a $R$-module. We may thus restrict our attention to the former.

Since $(S/U_1)_p\cong S_p/(U_1)_p$ as $R_p$-modules, we may now apply the construction from the beginning of this section, with $R$ replaced by $R_p$, $S$ by $S_p$, and $U_1$ by $(U_1)_p$. The elements $a_2,\dots ,a_n$ belong to $R_p$ and are linearly ordered there by divisibility. The isomorphism type of $S/U_1$ does not depend on the order of $a_2,\dots ,a_n$. For the purpose of computing the elementary divisors of $(S/U_1)_p$, we may therefore assume
without loss that $a_2|\cdots|a_n$ in $R_p$.

Once again, we take $A$ to be as in (\ref{matriz}) when $M=N$, and construct the upper triangular matrix $C$ as before, now with entries in $R_p$. From (\ref{dis}) and the divisibility relations among the $a_i$, the diagonal entries of $C$ are given by
$$
d_1=1,\ d_i=\frac{a_1(a_1,\dots ,a_{i+1})}{(a_1,\dots ,a_{i})(a_1,a_{i+1})} = 
\frac{a_1(a_1,a_2)}{(a_1,a_2)(a_1,a_{i+1})} = \frac{a_1}{(a_1,a_{i+1})},\quad 2\leq i<n.
$$
Since $a_{i+1}|a_{i+2}$, we have $(a_1,a_{i+1})|(a_1,a_{i+2})$, and therefore $d_{i+1}|d_i$. Thus
\begin{equation}
\label{ddiven}
d_{n-1}|\cdots|d_2.
\end{equation}

Making use of (\ref{ddiven}) and (\ref{Cij}), we can eliminate all entries $C_{i,j}$, with $i<j$, through a sequence of elementary row and column operations on $C$. Since $d_1=1$, we first clear the off-diagonal entries from Row 1 by adding multiplies of Column 1 to subsequent columns. Within each column of the resulting upper triangular matrix, each of the off-diagonal entries is a $R_p$-multiple of the entry in the diagonal position. We clear these columns one at a time, from Column $n-1$ to Column 3, to obtain the diagonal matrix with diagonal entries $1,d_2,\dots ,d_{n-1}$. We conclude that when $a_2,\dots ,a_n$ are ordered according to divisibility in $R_p$, the $p$-parts of the corresponding elements $d_2,\dots ,d_{n-1}$ of $R$ are the $p$-elementary divisors of $(S/U_1)_p$.

For any unimodular vector $(a_1,a_2,\dots ,a_n)\in R^n$, we wish to use the elementary divisors of $S/U_1$ to describe the structure of this module in terms of the elements $d_2,\dots ,d_{n-1}$ of $R$, defined as in (\ref{dis}). The remaining obstacle to this goal is that for each prime $p$, our description of the $p$-elementary divisors depends on a choice of ordering of $a_2,\dots ,a_n$ that is particular to $p$. While the structure of $S/U_1$ does not depend on the order of $a_2,\dots ,a_n$, the elements $d_i$ do. For example, $(d_2,d_3)=(2,3)$ if $R=\Z,\ n=4$ and $(a_1,a_2,a_3,a_4)=(12,15,10,20)$, but $(d_2,d_3)=(1,6)$ if $(a_1,a_2,a_3,a_4)=(12,20,10,15)$. We now show that the $p$-parts of $d_2,\dots ,d_{n-1}$ coincide for all orderings of $a_2,\dots ,a_n$.

  \begin{lemma}\label{permutation}
Let $\pi$ be any permutation of $\{2,\dots ,n\}$. Let $d_2,\dots ,d_{n-1}$ and $d_2',\dots ,d_{n-1}'$ correspond as in (\ref{dis}) to $(a_2,\dots ,a_n)$ and $(a_{\pi(2)},\dots ,a_{\pi(n)})$ respectively. Then for any prime $p$ in $R$, the lists of $p$-parts of $d_2,\dots ,d_{n-1}$ and of $d_2',\dots ,d_{n-1}'$ are permutations of each other. 
  \end{lemma}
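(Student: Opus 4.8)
The plan is to reduce the statement to an order-independent description of the multiset of $p$-adic valuations of the $d_i$. Fix a prime $p\in R$, write $v=v_p$ for the associated valuation, and set $\alpha=v(a_1)$ and $\beta_j=v(a_j)$ for $2\le j\le n$. The $p$-part of $d_i$ is $p^{v(d_i)}$, and two lists have the same $p$-parts up to permutation exactly when they have the same multiset of valuations; so it suffices to prove that the multiset $\{v(d_i):2\le i\le n-1\}$ depends only on $\alpha$ and on the \emph{multiset} $\{\beta_2,\dots,\beta_n\}$, not on the order in which the $\beta_j$ are listed. Since $d_1=1$ forces $v(d_1)=0$ for every ordering, I may just as well work with the full multiset $T=\{v(d_i):1\le i\le n-1\}$, as adjoining a common $0$ does not affect the conclusion.

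First I would compute $v(d_i)$ explicitly. Writing $m_i=\min\{\alpha,\beta_2,\dots,\beta_i\}$ for the running minimum (with $m_1=\alpha$), formula (\ref{dis}) gives, for $1\le i\le n-1$,
$$
v(d_i)=\alpha+m_{i+1}-\min\{\alpha,\beta_{i+1}\}-m_i=(\alpha-\beta_{i+1})^{+}-(m_i-\beta_{i+1})^{+},
$$
where $t^{+}=\max\{t,0\}$ and I have used the identity $m_{i+1}-m_i=-(m_i-\beta_{i+1})^{+}$; for $i=1$ both terms cancel, recovering $v(d_1)=0$. The key idea is to compare $T$ with the multiset
$$
V=\{(\alpha-\beta_j)^{+}:2\le j\le n\},
$$
which is \emph{manifestly} independent of the ordering of the $\beta_j$. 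I claim that $T=\bigl(V\setminus\{\max V\}\bigr)\cup\{0\}$; that is, $T$ is obtained from $V$ by deleting a single copy of its largest element and adjoining a $0$. Since this operation depends only on $V$, establishing the claim yields the lemma at once.

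To prove the displayed identity I would split the indices according to the running minimum. Let $L$ be the set of \emph{descent indices} $j$ (those with $\beta_j<m_{j-1}$), namely the left-to-right minima of $(\beta_2,\dots,\beta_n)$ lying strictly below $\alpha$; list them as $j_1<\dots<j_k$ with record values $r_t=\beta_{j_t}$, so that $\alpha>r_1>\dots>r_k$. If $i+1\notin L$ the running minimum does not drop and $v(d_i)=(\alpha-\beta_{i+1})^{+}$, exactly the corresponding entry of $V$. If $i+1=j_t\in L$ then $m_i=r_{t-1}$ (with $r_0:=\alpha$) and $\beta_{i+1}=r_t<\alpha$, whence $v(d_i)=(\alpha-r_t)-(r_{t-1}-r_t)=\alpha-r_{t-1}$. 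Thus the descent indices contribute $\{0,\alpha-r_1,\dots,\alpha-r_{k-1}\}$ to $T$, while those same indices contribute $\{\alpha-r_1,\dots,\alpha-r_k\}$ to $V$, the non-descent indices contributing identically to both. Since $r_k$ is the global minimum of the $\beta_j$ lying below $\alpha$, the value $\alpha-r_k$ equals $\max V$, and the single record copy of it in $V$ is the entry replaced by $0$ in $T$. (If no $\beta_j$ is less than $\alpha$ then $L=\emptyset$, every entry of $V$ is $0$, and $T=V$ trivially.) This gives $T=\bigl(V\setminus\{\max V\}\bigr)\cup\{0\}$, and hence the lemma.

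I expect the main obstacle to be the bookkeeping in the last step: verifying the telescoping $m_{j_t-1}=r_{t-1}$ of successive record values, and checking that it is precisely \emph{one} copy of $\max V$ — the record occurrence of the global minimum — that is exchanged for $0$, with care when the minimal valuation occurs with multiplicity (its later occurrences are non-records and survive in both $T$ and $V$). An alternative, more computational route would factor $\pi$ into adjacent transpositions: swapping $\beta_k$ and $\beta_{k+1}$ alters only $m_k$, hence only $v(d_{k-1})$ and $v(d_k)$, and a short case analysis on the relative order of $m_{k-1},\beta_k,\beta_{k+1},\alpha$ shows the unordered pair $\{v(d_{k-1}),v(d_k)\}$ is preserved. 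The global comparison with $V$ is cleaner and sidesteps that case analysis, so I would present it as the main argument.
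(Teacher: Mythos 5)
Your argument is correct, but it takes a genuinely different route from the paper's. The paper reduces to adjacent transpositions $(t\ \ t{+}1)$ and verifies, by a three-case divisibility analysis on $b=(b_1,\dots ,b_{t-1})$, $b_t$, $b_{t+1}$, that the unordered pair of $p$-parts $\{q_{t-1},q_t\}$ is preserved --- essentially the ``alternative, more computational route'' you mention at the end. Your main argument instead identifies the whole multiset of valuations at once: writing $\alpha=v_p(a_1)$ and $\beta_j=v_p(a_j)$, you show that $\{v_p(d_i)\}_{1\le i\le n-1}$ equals the manifestly order-independent multiset $\bigl(V\setminus\{\max V\}\bigr)\cup\{0\}$ with $V=\{(\alpha-\beta_j)^{+}\}$, via the left-to-right-minima bookkeeping. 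I checked the key steps: the formula $v_p(d_i)=(\alpha-\beta_{i+1})^{+}-(m_i-\beta_{i+1})^{+}$, the telescoping $m_{j_t-1}=r_{t-1}$ of record values, the identification of $\alpha-r_k$ with $\max V$ (the first occurrence of the global minimum below $\alpha$ is always a record, so exactly one copy is exchanged for $0$, even when the minimum repeats), and the degenerate case $L=\emptyset$; all are sound, and adjoining the common value $v_p(d_1)=0$ is harmless. What your approach buys is a closed-form, order-free description of the $p$-elementary divisors of $S/U_1$ --- the values $(\alpha-\beta_j)^{+}$ with one copy of the largest deleted --- which is strictly more information than invariance and would let one state Theorem \ref{S/U_1} without first reordering by divisibility in $R_p$. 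What the paper's route buys is brevity and freedom from valuations: it stays with gcd's of $p$-parts, so a zero coefficient $a_j$ causes no notational friction, whereas you should add a word fixing the convention $v_p(0)=\infty$ (under which your formulas remain valid, since such a $j$ is never a descent index and contributes $0$ to both $T$ and $V$).
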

  \begin{proof}
    Since every permutation of $\{2,\dots ,n\}$ is a composition of transpositions of consecutive integers, it is sufficient to prove the lemma for a permutation of this type. We assume that $\pi$ is the transposition $(t \ \ t+1)$, where $2\le t\le n-1$, and we fix a prime $p$ that divides $a_1$ in $R$. We write $b_i$ for the $p$-part of $a_i$, and write $q_i$ and $q'_i$ respectively for the $p$-parts of $d_i$ and $d'_i$. We write $b$ for the element $(b_1,\dots ,b_{t-1})$ of $R$. 
    It is immediate from (\ref{dis}) that $q_i=q'_i$ for all $i\not\in\{t-1,t\}$, and that $q_2=q'_2$ if $t=2$. For $t\ge 3$, we have
  $$
  q_{t-1}=\frac{b_1(b,b_t)}{b(b_1,b_t)},\ q_t=\frac{b_1(b,b_t,b_{t+1})}{(b,b_t)(b_1,b_{t+1})}, \
  q'_{t-1}=\frac{b_1(b,b_{t+1})}{b(b_1,b_{t+1})},\ q'_t=\frac{b_1(b,b_t,b_{t+1})}{(b,b_{t+1})(b_1,b_{t})}.
  $$
These expressions simplify as follows, according to order of the elements $b,b_t,b_{t+1}$ under divisibility. \\
\emph{Case 1}: If $b|b_t$ then
$$
q_{t-1}=q'_t=\frac{b}{(b_1,b_t)},\ \mathrm{and}\ q_t=q'_{t-1}=\frac{b_1(b,b_{t+1})}{b(b_1,b_{t+1})}.
  $$
  \emph{Case 2}: If $b\not | b_t$ and $b|b_{t+1}$, then $b_t|b$ and 
 $$
  q_{t-1}=q'_t=\frac{b_1}{b},\ \mathrm{and}\ q_t=q'_{t-1}=\frac{b_1}{(b_1,b_{t+1})}.
$$
  \emph{Case 3}: If $b\not | b_t$ and $b\not |b_{t+1}$, then $b_t|b$, $b_{t+1}|b$ and 
 $$
  q_{t-1}=q'_{t-1}=\frac{b_1}{b},\ \mathrm{and}\ q_t=q'_{t}=\frac{b_1(b_t,b_{t+1})}{b_tb_{t+1}}.
$$
Thus the list of $q'_i$ is either identical to the list of $q_i$, or differs from it by the transposition of $q_{t-1}$ and $q_t$.
\end{proof}

We are now in a position to state our main theorem on the structure of $S/U_1$, under the general hypothesis of a unimodular vector $(a_1,a_2,\dots ,a_n)$ in $R^n$.

\begin{theorem}\label{S/U_1} Suppose that $R$ is a principal ideal domain, $a_1\neq 0$, $n>2$, and $p\in R$
is a prime. Then the $p$-elementary divisors of
$S/U_1$ are given by the $p$-parts of $d_2,\dots,d_{n-1}$, as defined in (\ref{dis}). Moreover, we have
$$
S/U_1\cong R/Rd_2\oplus\cdots\oplus Rd_{n-1}.
$$
\end{theorem}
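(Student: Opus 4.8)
The plan is to reduce the global assertion to the prime-by-prime local computation already carried out in the discussion preceding Lemma~\ref{permutation}, and then to remove the dependence on the ordering of $a_2,\dots,a_n$ by means of Lemma~\ref{permutation} itself.

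First I would record that $S/U_1$ is a finitely generated torsion $R$-module: since $a_1 S\subseteq S_1\subseteq U_1$, it is annihilated by $a_1\neq 0$. Over the principal ideal domain $R$ such a module is determined up to isomorphism by its $p$-elementary divisors as $p$ ranges over the primes of $R$, and only the finitely many primes dividing $a_1$ can contribute. As already observed, for each such $p$ the $p$-elementary divisors of $S/U_1$ agree with those of the localization $(S/U_1)_p\cong S_p/(U_1)_p$ over the local principal ideal domain $R_p$, so it is enough to treat each fixed $p$ separately.

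Fixing $p$, I would invoke the local reduction already performed. After reordering $a_2,\dots,a_n$ so that $a_2\mid\cdots\mid a_n$ in $R_p$, which does not change the isomorphism type of $S/U_1$, the matrix $C=A^{-1}D$ is upper triangular with diagonal $1,d_2,\dots,d_{n-1}$, and the chain $d_{n-1}\mid\cdots\mid d_2$ holds by (\ref{ddiven}). Since $d_1=1$, the off-diagonal entries of row~$1$ are cleared by column operations; and for $2\le i<j$ relation (\ref{Cij}) gives $C_{i,j}\in Rd_i+Rd_j=Rd_j=RC_{j,j}$ because $d_j\mid d_i$, so the remaining off-diagonal entries are cleared column by column from the right. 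This diagonalizes $C$ to $\mathrm{diag}(1,d_2,\dots,d_{n-1})$, whence the $p$-elementary divisors of $(S/U_1)_p$ are exactly the $p$-parts of $d_2,\dots,d_{n-1}$ computed in this $p$-adapted order. The one subtlety is that these $d_i$ are formed from a reordered tuple, whereas the statement refers to the $d_i$ attached to the original $(a_2,\dots,a_n)$; Lemma~\ref{permutation} closes this gap by guaranteeing that the multiset of $p$-parts of $d_2,\dots,d_{n-1}$ does not depend on the order of $a_2,\dots,a_n$. This proves the first assertion.

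For the isomorphism I would match elementary divisors at every prime. For each $i$ the $p$-primary part of $R/Rd_i$ is cyclic with single elementary divisor equal to the $p$-part of $d_i$, so $R/Rd_2\oplus\cdots\oplus R/Rd_{n-1}$ has, at every prime $p$, precisely the nontrivial $p$-parts of $d_2,\dots,d_{n-1}$ as its $p$-elementary divisors. These agree with the $p$-elementary divisors of $S/U_1$ found above for each $p$, and since a finitely generated torsion module over a principal ideal domain is determined by its elementary divisors, we obtain $S/U_1\cong R/Rd_2\oplus\cdots\oplus R/Rd_{n-1}$. Since the genuinely hard inputs --- the divisibility relation (\ref{Cij}) and the order-invariance of the $p$-parts in Lemma~\ref{permutation} --- are already in hand, the main care required here is organizational: keeping the $p$-specific reordering cleanly separated from the fixed $d_i$ of the statement, and checking that the two modules share elementary divisors at each prime rather than merely having the same order.
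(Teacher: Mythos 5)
Your proposal is correct and follows essentially the same route as the paper: the paper's own proof of Theorem \ref{S/U_1} consists precisely of invoking the localization-and-diagonalization argument preceding Lemma \ref{permutation} (reduction to $R_p$, reordering by divisibility, clearing $C$ via (\ref{ddiven}) and (\ref{Cij})) together with Lemma \ref{permutation} itself, and then deducing the global isomorphism from the prime factorization of each $d_i$. You have simply written out those same steps in fuller detail.
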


\begin{proof} The first assertion follows from Lemma \ref{permutation} and the argument preceding it, while the second follows from the first
by means of the prime factorization of each $d_i$.
\end{proof}

\section{An example}

Consider the case $R =\Z$, $n=4$, and $(a_1,a_2,a_3,a_4)=(12,4,2,3)$. The corresponding homogeneous linear diophantine equation is
\begin{equation}
\label{eq}
12 X_1 + 4 X_2 + 2 X_3 + 3 X_4 =0.
\end{equation}
The following vectors form a $\Q$-basis for the space $T$ of solutions of (\ref{eq}) in $\Q^4$:
\begin{equation}
\label{defw}
w(1,2) = \left( -\frac{1}{3},1,0,0\right) , \; w(1,3) = \left( -\frac{1}{6},0,1,0\right) , \; 
w(1,4)= \left( -\frac{1}{4}, 0,0,1\right).
\end{equation}
Let $S=T\cap\Z^4$, namely the $\Z$-module of solutions of (\ref{eq}) in $\Z^4$, and let 
$W$ be the $\Z$-span of $w(1,2),w(1,3),w(1,4)$. Let us use Theorem \ref{basis} to obtain a $\Z$-basis of $S$ from these vectors.

According to Theorem \ref{ws}, we have
\begin{equation}
\label{wmods}
W/S\cong \Z/12\Z.
\end{equation}
Since $\{w(1,2),w(1,3),w(1,4)\}$ is a $\Q$-basis of $T$, (\ref{defw}) says that 
$\alpha_2 w(1,2)+ \alpha_3 w(1,3) + \alpha_4 w(1,4) \in S$
if and only if $\alpha_2,\alpha_3,\alpha_4 \in \Z$ and
\begin{equation}
\label{alfa}
4\alpha_2+ 2\alpha_3 + 3\alpha_4\equiv 0\mod 12.
\end{equation}
On the other hand, we have
\begin{equation}
\label{dei}
(a_1,a_2) = 4, (a_1,a_3) = 2, (a_1,a_4) = 3, (a_1,a_2,a_3)=2, (a_1,a_2,a_3,a_4)=1.
\end{equation}
In particular,
$$
a_1/(a_1,a_2)=3, (a_1,a_2)/(a_1,a_2,a_3)=2, (a_1,a_2,a_3)/(a_1,a_2,a_3,a_4)=2.
$$
We look for $a,b,c\in\Z$ such that the columns of the matrix
$$A = \left( \begin{array}{ccc}
	3 & a & b\\
	0 & 2 & c\\
	0 & 0 & 2
\end{array}\right)
$$
are the coordinates of basis vectors $z_2,z_3,z_4$ of $S$ relative to $w(1,2),w(1,3),w(1,4)$.
Since $|A|=12$, it follows from (\ref{wmods}) and (\ref{alfa}) that all we need to do is to make sure that
$$
4a+4\equiv 0\mod 12\text{  and }4b+2c+6\equiv 0\mod 12.
$$
It is clear that $(a,b,c)=(-1,1,1)$ satisfies these requirements, so
\begin{equation}
\label{dez}
z_2=3w(1,2),\; z_3=-w(1,2)+2w(1,3),\; z_4=w(1,2)+w(1,3)+2w(1,4)
\end{equation}
form a $\Z$-basis of $S$. On the other hand, by definition, we have
\begin{equation}
\label{dev}
v(1,2)=12w(1,2),\; v(1,3)=12w(1,3),\; v(1,4)=12w(1,4),
\end{equation}
and Theorem \ref{S/S_1} ensures that
\begin{equation}
\label{smods1}
S/S_1\cong \Z/12\Z\oplus \Z/\Z 12.
\end{equation}
We can easily confirm this fact. Indeed, it follows from (\ref{dez}) and (\ref{dev}) that the matrix whose
columns are the the coordinates of $v(1,2),v(1,3),v(1,4)$ relative to $z_2,z_3,z_4$ is
\begin{equation}
\label{deb}
\left( \begin{array}{ccc}
	4 & 2 & -3\\
	0 & 6 & -3\\
	0 & 0 & 6
\end{array}\right).
\end{equation}
Since the Smith Normal Form of this matrix is $\mathrm{diag}(1,12,12)$, it follows that (\ref{smods1}) is correct.

In view of (\ref{dis}) and (\ref{dei}), Theorem \ref{S/U_1} predicts that
\begin{equation}
\label{smodu1}
S/U_1\cong \Z/3\Z\oplus \Z/2\Z.
\end{equation}
Let us confirm this. Indeed, the definition of $u(1,2),u(1,3),u(1,4)$ as well as (\ref{dez}) and (\ref{deb}) yield
\begin{equation}
\label{deu}
u(1,2)=3w(1,2)=z_2,\; u(1,3)=6w(1,3)=3z_3+z_2,\; u(1,4)=4w(1,4)=2z_4-z_3-z_2.
\end{equation}
Thus, the matrix whose columns are the the coordinates of $u(1,2),u(1,3),u(1,4)$ relative to $z_2,z_3,z_4$ is
$$
\left( \begin{array}{ccc}
	1 & 1 & -1\\
	0 & 3 & -1\\
	0 & 0 & 2
\end{array}\right).
$$
This is clearly equivalent to $\mathrm{diag}(1,3,2)$, thereby confirming (\ref{smodu1}).

Since $(a_3,a_4)=1$, it follows from Lemma \ref{gen} that the following vectors $\Z$-span $S$:
$$
v(1,3)=(-2,0,12,0),\; v(1,4)=(-3,0,0,12),\; v(2,3)=(0,-2,4,0),
$$
$$
v(2,4)=(0,-3,0,4),\; v(3,4)=(0,0,-3,2).
$$
Theorems \ref{rela} and \ref{rela2} predict the following defining relations among these vectors:
\begin{equation}
\label{pela}
3v(1,3)-2v(1,4)+12v(3,4)=0,\; 3v(2,3)-2v(2,4)+4v(3,4)=0.
\end{equation}
These are clearly valid relations. Moreover, in the notation used in the proof of Theorem \ref{rela2}, we have
$(n,m,d,e,r)=(4,2,5,2,2)$. Moreover, the matrix whose columns are the coordinates of the $y(i,j,k)$ relative to the $x(i,j)$ is
$$
\left( \begin{array}{cc}
	3 & 0\\
	-2 & 0\\
	0 & 3\\	
	0 & -2\\
	12 & 4
\end{array}\right),
$$
whose Smith Normal Form is $\mathrm{diag}(1,1)$, as required in the proof of Theorem \ref{rela2}  to confirm that
(\ref{pela}) are indeed defining relations.

\medskip

\noindent{\bf Acknowledgment.} We thank the referee for a careful reading of the paper and valuable suggestions for changes.



\begin{thebibliography}{RBMW}





\bibitem[L]{L} T.Y. Lam, \emph{Serre’s Problem on Projective Modules}, Springer Monographs in Mathematics, 
Berlin, Springer-Verlag, 2006.

\bibitem[S]{S} R. P. Stanley, \emph{Combinatorics and Commutative Algebra}, Progress in Mathematics 41, 2nd edition, Birkh\"{a}user, 1996.
  \bibitem[CF]{CF} M. Clausen and A. Fortenbacher, \emph{Efficient solution of linear Diophantine equations}, J. Symbolic Comput. 8, 201--216, 1989.
\bibitem[CD]{CD} E. Contejean and H. Devie, \emph{An efficient incremental algorithm for solving systems of linear Diophantine equations}, Inform. and Comput. 113, 1, 143--173, 1994.     
\bibitem[PV]{PV} P. Pis\'{o}n-Casares and A. Vigneron-Tenorio, \emph{$\N$-solutions to linear systems over $\Z$}, Linear Algebra Appl. 384, 135--154, 2004.
\bibitem[CKO]{CKO} S. Chapman, U. Krause, and E. Oeljeklaus, \emph{Monoids determined by a homogeneous linear Diophantine equation and the half-factorial property}, J. Pure Appl. Algebra, 151, 2, 107--133, 2000.
\bibitem[K]{K} S. L. Kryvyi, \emph{An algorithm for constructing the basis of the solution set for systems of linear Diophantine equations over the ring of integers}, Cybernet. Systems Anal. 45, 6, 997--1003, 2009.







\end{thebibliography}
\end{document}